\titleformat*{\section}{\large\bfseries}
\newtheorem{theorem}{Theorem}[section]
\newtheorem{lemma}[theorem]{Lemma}
\newtheorem{definition}[theorem]{Definition}
\newtheorem{example}[theorem]{Example}
\numberwithin{equation}{section}
\title{Inexact infinite products of weak quasi-contraction mappings in $b$-metric spaces}
\author{\large Anuradha Gupta and Manu Rohilla$^*$}
\date{}
\begin{document}
\maketitle
\begin{abstract} 
The influence of errors on the convergence of infinite products of weak quasi-contraction mappings in $b$-metric spaces is explored. An example demonstrating the necessity of convergence of the sequence of computational errors to zero is also provided. Moreover, we discuss weak ergodic theorems in the setting of $b$-metric spaces.

\textbf{Mathematics Subject Classification:} $47$H$09$, $47$H$10$, $47$A$35$.

\textbf{Keywords:}  $b$-metric space, inexact orbit, infinite product, weak quasi-contraction mapping, weak ergodic theorems. 
\end{abstract}    
\section{Introduction and Preliminaries}
The study of  convergence of iterations of nonexpansive mappings has always been a central topic in nonlinear analysis. Indeed, it is natural to ponder the  behavior of  the iterates of nonexpansive mappings in the presence of computational errors. Several authors (see \cite{3,4,9,10,11,12,13}) proved convergence results for iterates of nonexpansive mappings in the presence of computational errors in the context of Banach spaces and metric spaces. They have formulated necessary conditions on the sequence of computational errors for proving the convergence results.
  
Throughout this paper, we denote by $\mathbb{N}_0=\mathbb{N} \cup \{ 0\}$, where $\mathbb{N}$ denotes the set of natural numbers. The following definitions will be used in the sequel:
\begin{definition}
\emph{\cite{6} A $b$-metric on a nonempty set $X$ is a function $d:X \times X \rightarrow [0,\infty)$ such that for all $x,y,z \in X$ and a real number $s \geq 1$, the following conditions are satisfied:}

\emph{(i) $d(x,y)=0$ if and only if $x=y$,}

\emph{(ii) $d(x,y)=d(y,x)$,}

\emph{(iii) $d(x,y) \leq s[d(x,z)+d(z,y)]$. }\\
\emph{Then the pair $(X,d)$ is called a $b$-metric space. The number $s$ is called the coefficient of $(X,d)$.}
\end{definition}
\begin{definition}
\emph{\cite{2} Let $(X,d)$ be a $b$-metric space. Then}

\emph{(i) A sequence $\{x_n\} \subset X$ converges to $x \in X$ if and only if $\lim\limits_{n \rightarrow \infty}d(x_n,x)=0$.}

\emph{(ii) A sequence $\{x_n\} \subset X$ is said to be a Cauchy sequence if and only if $\lim\limits_{n,m \rightarrow \infty}d(x_n,x_m)=0$.}

\emph{(iii) A $b$-metric space $(X,d)$ is said to be complete if every Cauchy sequence $\{x_n\} \subset X$ converges to a point $x \in X$ such that $\lim\limits_{n \rightarrow \infty}d(x_n,x)=0=\lim\limits_{n,m \rightarrow \infty}d(x_n,x_m)$.}
\end{definition}
 Let $(X,d)$ be a $b$-metric space with coefficient $s \geq 1$. Let $T:X \rightarrow X$ be a mapping. Then the orbit and the double orbit induced by $T$ are defined by
\begin{align*}
\mathcal{O}_T(x):&= \{T^nx:n \in \mathbb{N}_0\},\\
\mathcal{O}_T(x,y):&= \mathcal{O}_T(x) \cup \mathcal{O}_T(y).
\end{align*}
By convention, $T^{n+1}=T \circ T^n$ and $T^0=I$, where $I:X \rightarrow X$ is the identity mapping. Recently, Bessenyei \cite{1} introduced the notion of weak quasi-contraction as follows:

 A mapping $T: X \rightarrow X$ is said to be a weak quasi-contraction if $T$ induces bounded orbits and
 $$d(Tx,Ty) \leq \psi (\mbox{diam } \mathcal{O}_T(x,y)) \thinspace \thinspace \mbox{for all } \thinspace x,y \in X,$$
 where $\psi:[0,\infty) \rightarrow [0,\infty)$ satisfies the following conditions:
 
 (i) $\psi$ is increasing and upper semi-continuous,
 
 (ii) $\psi(0)=0$,
 
 (iii) $\psi(t)<t$ for all $t>0$.
 
Mitrovi\'{c} and Hussain \cite{7} obtained fixed point results for weak quasi-contractions in the context of $b$-metric spaces. Let $C$ be a closed, bounded and convex subset of a Banach space endowed with a suitable complete metric.  Reich and Zaslavski \cite{12} obtained convergence results of infinite products of nonexpansive mappings on $C$. They \cite[Theorem 2.1]{12} proved that for a generic sequence $\{R_i\}_{i=1}^{\infty}$ in this space
$$\Vert R_nR_{n-1}\ldots R_2R_1x-R_nR_{n-1} \ldots R_2R_1y \Vert \rightarrow 0$$
an $n \rightarrow \infty$, uniformly for all $x,y \in X$. In the literature of population biology (see \cite{5,8}) such results are known as weak ergodic theorems.  They \cite[Theorem 2.2]{12}  proved the existence of a set $\mathfrak{R}$ which is a countable intersection of open and everywhere dense subsets of the space of sequences of nonexpansive mappings of $C$  such that for each mapping $g:\mathbb{N} \rightarrow \mathbb{N}$ and each  $\{S_i\}_{i=1}^{\infty} \in \mathfrak{R}$,
$$\Vert S_{g(n)}S_{g(n-1)}\ldots S_{g(2)}S_{g(1)}x-S_{g(n)}S_{g(n-1)}\ldots S_{g(2)}S_{g(1)}y \Vert \rightarrow 0$$
as $n \rightarrow \infty$, uniformly with respect to $g$ for all $x,y \in C$. Moreover, Butnariu et al. \cite{4} obtained convergence results for infinite products by assuming the convergence of exact infinite products and summability of errors. Pustylnik et al. \cite{11} proved that it is possible to establish the uniform  convergence of infinite products by only assuming that the computational errors to converge to $0$. Moreover, Reich and Zaslavski \cite{13} studied the convergence of infinite products of nonexpansive mappings in metric spaces by assuming the the convergence of uniform convergence of exact infinite orbits only on bounded subsets of metric space. Several authors (see \cite{3,9,10}) have studied the behavior of inexact orbits under the influence of computational errors and obtained convergence results.
 
 The main objective of the paper is to provide an  affirmative answer to the question of preservation of convergence of infinite products of weak quasi-contraction mappings in the setting of $b$-metric spaces. We obtain convergence results under the assumption that the exact infinite orbits converge and the sequence of computational errors converge to zero. Also, convergence results are established by assuming the convergence of exact infinite orbits on bounded subsets of the $b$-metric space. We provide an example to illustrate that convergence of the sequence of  computational errors to $0$ is necessary for establishing the convergence of inexact orbits. In the last section we formulate weak ergodic theorems in $b$-metric spaces.  
\section{Main Results}
Let $(X,d)$ be a $b$-metric space with coefficient $s \geq 1$. For each $x \in X$ and each nonempty subset $A \subset X$ define
$$d(x,A):=\inf \{d(x,y):y \in A\}.$$
Let $\mathfrak{F}$ be a nonempty set of mappings $f: \mathbb{N}_0 \rightarrow \mathbb{N}_0$ with the following property:

$(\mathcal{F})$ If $f \in \mathfrak{F}$ and $p \in \mathbb{N}$, then $f_p \in \mathfrak{F}$, where $f_p(i)=f(i+p)$ for all $i \in \mathbb{N}_0$.

\begin{theorem}\label{theorem1}
Let $(X,d)$ be a complete $b$-metric space with coefficient $s \geq 1$. Let $E$ be a nonempty and closed subset of $X$. 
For each $i \in \mathbb{N}_0$, let $T_i: X \rightarrow X$ be a  weak quasi-contraction mapping such that $T_i(E) \subset E$ for all $i \in \mathbb{N}_0$ and
\begin{align}
\mbox{diam }\mathcal{O}_{T_{f(j)}}(T_{f(i)}x,T_{f(i)}y) & \leq \mbox{diam }\mathcal{O}_{T_{f(j)}}(x,y) \thinspace \thinspace \mbox{for all } x,y \in X, \thinspace f \in \mathfrak{F} \thinspace \thinspace \mbox{and } i,j \in \mathbb{N}_0.\label{equation2}
\end{align}
Let $\mathfrak{F}$ be a nonempty set of mappings $f: \mathbb{N}_0 \rightarrow \mathbb{N}_0$  with property $(\mathcal{F})$. Suppose  that the following property holds:

$(\mathcal{P}_1)$ for each $\epsilon>0$, there exists a natural number $n_{\epsilon}$ such that for each $f \in \mathfrak{F}$ and each $x \in X$ we have
\begin{align*}
d(T_{f(n_{\epsilon})}T_{f(n_{\epsilon}-1)} \ldots T_{f(1)}T_{f(0)}x,E) < \epsilon.
\end{align*}
Then for each $\epsilon>0$, there exist $\delta>0$ and $n_0 \in \mathbb{N}$ such that for each $f \in \mathfrak{F}$ and each sequence $\{x_i\}_{i=0}^{\infty} \subset X$ satisfying 
\begin{align}\label{equation5}
\mbox{diam } \mathcal{O}_{T_{f(j)}}(x_{i+1},T_{f(i)}x_i) \leq \delta \thinspace \thinspace \mbox{for all } \thinspace i,j \in \mathbb{N}_0,
\end{align}
the following inequality is satisfied
$$d(x_i,E) < \epsilon \thinspace \thinspace \mbox{for all } \thinspace i \geq n_0.$$
\end{theorem}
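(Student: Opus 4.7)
The plan is to combine property $(\mathcal{P}_1)$, which guarantees every exact orbit lies within $\epsilon'$ of $E$ after $N:=n_{\epsilon'}$ steps, with a finite-horizon stability estimate showing that the inexact iterate $x_{k+i}$ stays close to an auxiliary exact orbit provided the error bound $\delta$ is small enough. A single application of the $b$-metric triangle inequality then yields $d(x_{k+N+1},E)<\epsilon$ for every admissible $k$, and a simple reindexing produces the required $n_0$.

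Given $\epsilon>0$, I would set $\epsilon':=\epsilon/(2s)$ and invoke $(\mathcal{P}_1)$ to obtain $N\in\mathbb{N}$ such that
$$d(T_{f(N)}T_{f(N-1)}\cdots T_{f(0)}x,E)<\epsilon'$$
for every $f\in\mathfrak{F}$ and every $x\in X$. For any $k\geq 1$, property $(\mathcal{F})$ supplies $f_k\in\mathfrak{F}$, so applying the above bound to $f_k$ at $x_k$ together with the auxiliary exact orbit $y_0:=x_k$, $y_{i+1}:=T_{f(k+i)}y_i$, produces $d(y_{N+1},E)<\epsilon'$.

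The heart of the argument is then to bound $e_i:=d(x_{k+i},y_i)$ by a constant multiple of $\delta$ uniformly in $k$, $f$ and $\{x_i\}$, for $i\leq N+1$. Observe that (\ref{equation5}) immediately yields $\mbox{diam }\mathcal{O}_{T_{f(j)}}(x_m)\leq\delta$ for every $m\geq 1$ and every $j$; an induction using (\ref{equation2}) then propagates this to $\mbox{diam }\mathcal{O}_{T_{f(j)}}(y_i)\leq\mbox{diam }\mathcal{O}_{T_{f(j)}}(x_k)\leq\delta$ for $k\geq 1$. Splitting the diameter of $\mathcal{O}_{T_{f(k+i)}}(x_{k+i},y_i)$ into within-orbit and cross-orbit pairs and anchoring a cross-orbit pair first at $x_{k+i}$ and then at $y_i$ via two applications of the $b$-metric triangle inequality yields
$$\mbox{diam }\mathcal{O}_{T_{f(k+i)}}(x_{k+i},y_i)\leq s(1+s)\delta+s^2 e_i.$$
Combined with the weak quasi-contraction bound
$$e_{i+1}\leq s\,d(x_{k+i+1},T_{f(k+i)}x_{k+i})+s\,\psi\bigl(\mbox{diam }\mathcal{O}_{T_{f(k+i)}}(x_{k+i},y_i)\bigr),$$
the estimate $d(x_{k+i+1},T_{f(k+i)}x_{k+i})\leq\delta$ coming from (\ref{equation5}), and the crude inequality $\psi(t)\leq t$, this produces the recursion
$$e_{i+1}\leq s(1+s+s^2)\delta+s^3 e_i,$$
whence $e_{N+1}\leq\Lambda(s,N)\,\delta$ for an explicit constant $\Lambda(s,N)$ depending only on $s$ and $N$.

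I would then set $\delta:=\epsilon'/\Lambda(s,N)$ and $n_0:=N+2$. For each $i\geq n_0$, taking $k:=i-(N+1)\geq 1$ gives
$$d(x_i,E)\leq s\bigl[d(x_{k+N+1},y_{N+1})+d(y_{N+1},E)\bigr]<s[\epsilon'+\epsilon']=2s\epsilon'=\epsilon.$$
The main obstacle is precisely the diameter estimate in the third paragraph: the $b$-metric constant $s$ forbids a naive triangle inequality, so one must carefully exploit (\ref{equation5}) to control orbits of the inexact iterates and (\ref{equation2}) to propagate those bounds through $T_{f(k+i-1)}$, while keeping the recursion's multiplicative factor independent of $k$ and of the initial point $x_0$.
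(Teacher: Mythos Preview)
Your proposal is correct, and the overall skeleton---shift by property $(\mathcal{F})$, run an auxiliary exact orbit from the shifted starting point, compare via the $b$-triangle inequality---matches the paper. The key estimate, however, is organized differently. The paper inducts directly on the \emph{double-orbit diameter} $\mbox{diam }\mathcal{O}_{T_{\tilde f(j)}}(z_i,y_i)$: from $\mbox{diam }\mathcal{O}_{T_{\tilde f(j)}}(z_{i+1},T_{\tilde f(i)}z_i)\leq\delta$ and assumption (\ref{equation2}) applied to the pair $(z_i,y_i)$ one gets the clean recursion $\mbox{diam }\mathcal{O}_{T_{\tilde f(j)}}(z_{i+1},y_{i+1})\leq s\delta+s\,\mbox{diam }\mathcal{O}_{T_{\tilde f(j)}}(z_i,y_i)$, hence the bound $(i+1)s^{i}\delta$ and the choice $\delta<\epsilon/(2n_0 s^{n_0})$. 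You instead track only the \emph{point distance} $e_i=d(x_{k+i},y_i)$, invoke the weak quasi-contraction inequality $d(Tx,Ty)\leq\psi(\mbox{diam }\mathcal{O}_T(x,y))$, and rebuild the needed orbit diameter from $e_i$ together with the single-orbit bounds $\mbox{diam }\mathcal{O}_{T_{f(j)}}(x_{k+i}),\,\mbox{diam }\mathcal{O}_{T_{f(j)}}(y_i)\leq\delta$ (the latter via (\ref{equation2})). This yields $e_{i+1}\leq s(1+s+s^2)\delta+s^{3}e_i$, so growth like $s^{3i}$ rather than $s^{i}$. The paper's diameter-based induction is tighter and avoids the separate single-orbit preliminaries; your distance-based recursion is a more familiar ``error-propagation'' argument and makes the use of the weak quasi-contraction hypothesis explicit. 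Either route suffices, since only a finite constant $\Lambda(s,N)$ is needed.
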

\begin{proof}
Suppose that $\epsilon>0$ is given. Then by property $(\mathcal{P}_1)$, there exists $n_0 \in \mathbb{N}$ such that for each $f \in \mathfrak{F}$ and each $x \in X$ we have 
\begin{align}\label{equation6}
d(T_{f(n_0-2)}T_{f(n_0-3)} \ldots T_{f(1)}T_{f(0)}x,E) < \frac{\epsilon}{2s}.
\end{align}
Choose a real number $\delta$ such that
\begin{align}\label{equation7}
0<\delta < \frac{\epsilon}{2n_0s^{n_0}}.
\end{align}
Suppose that $f \in \mathfrak{F}$ and the sequence $\{x_i\}_{i=0}^{\infty} \subset X$ satisfies (\ref{equation5}). Let $n \geq n_0$ be an integer. It suffices to prove that $d(x_n,E) < \epsilon$. Set $z_i=x_{i+n-n_0+1}$ for all $i \in \mathbb{N}_0$. Then $z_{n_0-1}=x_n$. Let $\tilde{f}(i)=f(i+n-n_0+1)$ for all $i \in \mathbb{N}_0$. As $n\geq n_0$, by property $(\mathcal{F})$ we infer that $\tilde{f} \in \mathfrak{F}$.
For all $i, j \in \mathbb{N}_0$ using (\ref{equation5}) we have
\begin{align}
\mbox{diam }\mathcal{O}_{T_{\tilde{f}(j)}}(z_{i+1},T_{\tilde{f}(i)}z_i)& = \mbox{diam }\mathcal{O}_{T_{f(j+n-n_0+1)}}(x_{i+n-n_0+2},T_{f(i+n-n_0+1)}x_{i+n-n_0+1})\nonumber \\
&\leq \delta \label{equation8}.
\end{align}
Set $y_0=z_0$ and $y_{i+1}=T_{\tilde{f}(i)}y_i$ for all $i \in \mathbb{N}_0$. Then $y_{n_0-1}=T_{\tilde{f}(n_0-2)}T_{\tilde{f}(n_0-3)}\ldots T_{\tilde{f}(1)}T_{\tilde{f}(0)}z_0$. Since $\tilde{f} \in \mathfrak{F}$, using (\ref{equation6}) we deduce that
\begin{align}\label{equation9}
d(y_{n_0-1},E) < \frac{\epsilon}{2s}.
\end{align}
We claim that if $j \in \mathbb{N}_0$, then
\begin{align}\label{equation10}
\mbox{diam }\mathcal{O}_{T_{\tilde{f}(j)}}(z_i,y_i) \leq (i+1)s^i\delta  \thinspace \thinspace \thinspace \mbox{for all } \thinspace i \in \mathbb{N}_0.
\end{align}
We shall prove this by induction on $i$. If $i=0$, then using (\ref{equation5}) we have
\begin{align*}
\mbox{diam } \mathcal{O}_{T_{\tilde{f}(j)}}(z_0,y_0)&= \mbox{diam }\mathcal{O}_{T_{f(j+n-n_0+1)}}(x_{n-n_0+1})\\
& \leq \mbox{diam }\mathcal{O}_{T_{f(j+n-n_0+1)}}(x_{n-n_0+1},T_{f(n-n_0)}x_{n-n_0})\\
& \leq  \delta.
\end{align*}
Therefore, (\ref{equation10}) holds for $i=0$. Suppose that (\ref{equation10}) holds for some $i \in \mathbb{N}$. Now, we prove that it holds for $i+1$. Since the orbits are bounded,
$$\mbox{diam }\mathcal{O}_{T_{\tilde{f}(j)}}(z_{i+1},y_{i+1})= \sup_{k,l \in \mathbb{N}_0} \{ d(T_{\tilde{f}(j)}^k z_{i+1}, T_{\tilde{f}(j)}^l y_{i+1}),d(T_{\tilde{f}(j)}^k z_{i+1},T_{\tilde{f}(j)}^l z_{i+1}),d(T_{\tilde{f}(j)}^k y_{i+1},T_{\tilde{f}(j)}^l y_{i+1}) \}.$$
Consider  
\begin{align*}
d(T_{\tilde{f}(j)}^k z_{i+1},T_{\tilde{f}(j)}^l y_{i+1})& \leq s d(T_{\tilde{f}(j)}^k z_{i+1}, T_{\tilde{f}(j)}^k T_{\tilde{f}(i)}z_i)+sd(T_{\tilde{f}(j)}^k T_{\tilde{f}(i)}z_i,T_{\tilde{f}(j)}^l y_{i+1})\\
& \leq s \psi^k(\mbox{diam } \mathcal{O}_{T_{\tilde{f}(j)}}(z_{i+1},T_{\tilde{f}(i)}z_i))+s \thinspace \mbox{diam } \mathcal{O}_{T_{\tilde{f}(j)}}(T_{\tilde{f}(i)}z_i,y_{i+1})\\
&< s \thinspace \mbox{diam } \mathcal{O}_{T_{\tilde{f}(j)}}(z_{i+1},T_{\tilde{f}(i)} z_i)+s \thinspace \mbox{diam } \mathcal{O}_{T_{\tilde{f}(j)}}(T_{\tilde{f}(i)}z_i,T_{\tilde{f}(i)}y_i).
\end{align*}
Using (\ref{equation2}) we have
\begin{align*}
d(T_{\tilde{f}(j)}^k z_{i+1},T_{\tilde{f}(j)}^l y_{i+1})& \leq s \thinspace \mbox{diam } \mathcal{O}_{T_{\tilde{f}(j)}}(z_{i+1},T_{\tilde{f}(i)} z_i)+s \thinspace \mbox{diam } \mathcal{O}_{T_{\tilde{f}(j)}}(z_i,y_i).  
\end{align*}
Using (\ref{equation8}) and the induction hypothesis we get,
$$d(T_{\tilde{f}(j)}^k z_{i+1},T_{\tilde{f}(j)}^l y_{i+1}) < s \delta+(i+1)s^{i+1}\delta \leq (i+2)s^{i+1}\delta.$$
Now using (\ref{equation8}) we have
\begin{align*}
d(T_{\tilde{f}(j)}^kz_{i+1},T_{\tilde{f}(j)}^l z_{i+1}) & \leq \mbox{diam } \mathcal{O}_{T_{\tilde{f}(j)}}(z_{i+1}) \leq \mbox{diam } \mathcal{O}_{T_{\tilde{f}(j)}}(z_{i+1},T_{\tilde{f}(i)}z_i) \leq \delta.
\end{align*}
Also, using (\ref{equation2}) and the induction hypothesis we have
\begin{align*}
d(T_{\tilde{f}(j)}^ky_{i+1},T_{\tilde{f}(j)}^l y_{i+1})& \leq \mbox{diam } \mathcal{O}_{T_{\tilde{f}(j)}} (y_{i+1})=\mbox{diam } \mathcal{O}_{T_{\tilde{f}(j)}}(T_{\tilde{f}(i)}y_i) \leq \mbox{diam } \mathcal{O}_{T_{\tilde{f}(j)}}(z_i,y_i)\leq (i+1)s^i \delta.
\end{align*}
This gives $\mbox{diam } \mathcal{O}_{T_{\tilde{f}(j)}}(z_{i+1},y_{i+1}) \leq (i+2)s^{i+1} \delta$. Therefore, the claim follows. This implies that 
\begin{align}\label{equation11}
\mbox{diam } \mathcal{O}_{T_{\tilde{f}(j)}}(z_{n_0-1},y_{n_0-1}) \leq n_0 s^{n_0-1} \delta.
\end{align}
By (\ref{equation9}) and (\ref{equation11}) we deduce that
\begin{align*}
d(x_n,E)=d(z_{n_0-1},E)& \leq sd(z_{n_0-1},y_{n_0-1})+s d(y_{n_0-1},E)\\
& \leq s \thinspace \mbox{diam } \mathcal{O}_{T_{\tilde{f}(j)}}(z_{n_0-1},y_{n_0-1})+s d(y_{n_0-1},E)\\
&\leq n_0s^{n_0}\delta+ \frac{\epsilon}{2}.
\end{align*}
By (\ref{equation7}) it follows that $d(x_n,E) < \epsilon$. This completes the proof.
\end{proof}
\begin{theorem}\label{theorem2}
Let $(X,d)$ be a complete $b$-metric space with coefficient $s \geq 1$. Let $E$ be a nonempty and closed subset of $X$. For each $i \in \mathbb{N}_0$, let $T_i: X \rightarrow X$ be a weak quasi-contraction mapping such that $T_i(E) \subset E$ for all $i \in \mathbb{N}_0$ and $(\ref{equation2})$ holds.  
Let $\mathfrak{F}$ be a nonempty set of mappings $f: \mathbb{N}_0 \rightarrow \mathbb{N}_0$ with  property $(\mathcal{F})$. Suppose  that property $(\mathcal{P}_1)$ holds. Let $\{\delta_i \}_{i=0}^{\infty}$ be a sequence of positive numbers such that  $\lim\limits_{i \rightarrow \infty} \delta_i=0$.
Then for each $\epsilon>0$, there exists $\overline{n} \in \mathbb{N}$ such that for each $f \in \mathfrak{F}$ and each sequence $\{x_i\}_{i=0}^{\infty} \subset X$ satisfying 
\begin{align}\label{equation16}
\mbox{diam } \mathcal{O}_{T_{f(j)}}(x_{i+1},T_{f(i)}x_i) \leq \delta_i \thinspace \thinspace \mbox{for all } \thinspace i,j \in \mathbb{N}_0,
\end{align}
the following inequality is satisfied
$$d(x_i,E) < \epsilon \thinspace \thinspace \mbox{for all } \thinspace i \geq \overline{n}.$$
\end{theorem}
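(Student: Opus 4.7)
The plan is to reduce Theorem \ref{theorem2} to Theorem \ref{theorem1} by truncating the sequence $\{x_i\}$ to a tail on which the error bounds $\delta_i$ are uniformly small, and then exploiting property $(\mathcal{F})$ to keep the shifted index function inside $\mathfrak{F}$.

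First, given $\epsilon>0$, I would apply Theorem \ref{theorem1} to obtain a uniform pair $\delta>0$ and $n_0 \in \mathbb{N}$ such that any sequence whose consecutive orbit diameters are bounded by $\delta$ satisfies $d(x_i,E)<\epsilon$ for all $i \geq n_0$. Since $\delta_i \to 0$, I can then choose $N \in \mathbb{N}$ large enough so that $\delta_i \leq \delta$ for every $i \geq N$.

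Next, I would perform the shift: set $y_i := x_{i+N}$ for $i \in \mathbb{N}_0$ and $\tilde{f}(i) := f(i+N)$. By property $(\mathcal{F})$, the shifted function $\tilde{f}$ again lies in $\mathfrak{F}$. A direct substitution using $(\ref{equation16})$ gives, for every $i,j \in \mathbb{N}_0$,
\begin{align*}
\mbox{diam } \mathcal{O}_{T_{\tilde{f}(j)}}(y_{i+1},T_{\tilde{f}(i)}y_i) = \mbox{diam } \mathcal{O}_{T_{f(j+N)}}(x_{i+N+1},T_{f(i+N)}x_{i+N}) \leq \delta_{i+N} \leq \delta,
\end{align*}
so the sequence $\{y_i\}$ and the index function $\tilde{f}$ satisfy the hypotheses of Theorem \ref{theorem1} with parameter $\delta$. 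Therefore $d(y_i,E)<\epsilon$ for all $i \geq n_0$, i.e., $d(x_k,E)<\epsilon$ for all $k \geq n_0+N$, and setting $\overline{n} := n_0+N$ completes the argument.

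There is no real obstacle once Theorem \ref{theorem1} is in hand: the only subtlety is verifying that the shifted index function remains in $\mathfrak{F}$, which is precisely what property $(\mathcal{F})$ guarantees, and that the choice of $\overline{n}$ can be made uniform in $f \in \mathfrak{F}$, which follows because both $\delta$ and $n_0$ from Theorem \ref{theorem1} are themselves uniform in $f$ and $N$ depends only on the sequence $\{\delta_i\}$, not on $f$ or on the particular inexact orbit $\{x_i\}$.
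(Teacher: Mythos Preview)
Your proposal is correct and follows essentially the same approach as the paper's proof: invoke Theorem \ref{theorem1} to obtain a uniform $\delta$ and $n_0$, pick an index past which $\delta_i<\delta$, shift the sequence and the index function using property $(\mathcal{F})$, and apply Theorem \ref{theorem1} to the shifted data. The only cosmetic difference is that the paper shifts by $n'=\max\{n_0,n_1\}$ and takes $\overline{n}\geq 2n_0+n_1$, whereas you shift by $N$ and take $\overline{n}=n_0+N$; your choice is in fact slightly tighter but the argument is the same.
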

\begin{proof}
By Theorem \ref{theorem1} there exist $\delta>0$ and $n_0 \in \mathbb{N}$ such that the following property is satisfied:

$(\mathcal{P}_2)$  for each $f \in \mathfrak{F}$ and each sequence $\{x_i\}_{i=0}^{\infty} \subset X$ such that (\ref{equation16}) holds, the following inequality is satisfied
$$d(x_i,E) < \epsilon \thinspace \thinspace \mbox{for all } \thinspace i \geq n_0.$$
Since $\lim\limits_{i \rightarrow \infty} \delta_i=0$, there exists $n_1 \in \mathbb{N}$ such that 
\begin{align}\label{equation17}
\delta_i < \delta \thinspace \thinspace \mbox{for all } \thinspace i \geq n_1.
\end{align}
Let $n'=\max \{n_0,n_1\}$ and $\overline{n} \geq 2n_0+n_1$. Then $\overline{n} \geq n'$. Suppose that $f \in \mathfrak{F}$ and the sequence $\{x_i \}_{i=0}^{\infty} \subset X$ satisfies (\ref{equation16}). Let $n \geq \overline{n}$ be an integer. It suffices to prove that $d(x_n,E)< \epsilon$. Set $z_i=x_{i+n'}$ and $\overline{f}(i)=f(i+n')$ for all $i \in \mathbb{N}_0$. Since $n' \in \mathbb{N}$, by property $(\mathcal{F})$ we get $\overline{f} \in \mathfrak{F}$. In view of (\ref{equation16}) and (\ref{equation17}),
\begin{align*}
\mbox{diam } \mathcal{O}_{T_{\overline{f}(j)}}(z_{i+1},T_{\overline{f}(i)} z_i)&= \mbox{diam } \mathcal{O}_{T_{f(j+n')}}(x_{i+n'+1},T_{f(i+n')}x_{i+n'})\\
& \leq \delta_{i+n'}\\
& < \delta.
\end{align*}
By using property $(\mathcal{P}_2)$ we infer that
\begin{align}\label{equation18}
d(z_i,E)< \epsilon \thinspace \thinspace \mbox{for all } \thinspace i \geq n_0.
\end{align}
Since $n \geq \overline{n}$ and $n'=\max \{n_0,n_1\}$, $n-n' \geq n_0$. Therefore, from (\ref{equation18}) it follows that
$$d(x_n,E)=d(z_{n-n'},E) < \epsilon.$$
\end{proof}
If we consider the set $E$ to be a singleton, then we have the following result:
\begin{theorem}
Let $E=\{x\}$ and all the assumptions of Theorem $\ref{theorem2}$ are satisfied. Then for each $\epsilon>0$, there exists $\breve{n} \in \mathbb{N}$ such that for each $f \in \mathfrak{F}$ and each sequence $\{x_i\}_{i=0}^{\infty} \subset X$ satisfying $(\ref{equation16})$, the following inequality is satisfied
$$d(x_i,x) < \epsilon \thinspace \thinspace \mbox{for all } \thinspace i \geq \breve{n}.$$ 
\end{theorem}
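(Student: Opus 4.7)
The plan is to derive the assertion as a direct specialization of Theorem \ref{theorem2}. The only substantive observation is that when $E=\{x\}$ is a singleton, the point-to-set distance collapses to the underlying point-to-point distance: for every $y \in X$,
$$d(y,E) = \inf\{d(y,z) : z \in E\} = d(y,x).$$

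First I would verify that the singleton $\{x\}$ satisfies the structural hypotheses imposed on $E$ in Theorem \ref{theorem2}. Nonemptiness is trivial; closedness of a singleton in a $b$-metric space is an immediate consequence of axiom (i) of Definition 1.1. The invariance condition $T_i(E) \subset E$ specializes to $T_iz=x$ for every $i \in \mathbb{N}_0$, which is to say that $x$ is a common fixed point of the family $\{T_i\}_{i \in \mathbb{N}_0}$; this is already inherited as part of the hypothesis and does not require independent proof.

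Next, given $\epsilon>0$, I would invoke Theorem \ref{theorem2} to obtain an integer $\overline{n} \in \mathbb{N}$ such that, for every $f \in \mathfrak{F}$ and every sequence $\{x_i\}_{i=0}^{\infty} \subset X$ satisfying (\ref{equation16}),
$$d(x_i,E) < \epsilon \quad \mbox{for all } i \geq \overline{n}.$$
Setting $\breve{n} := \overline{n}$ and invoking the identification $d(x_i,E)=d(x_i,x)$ yields the desired conclusion.

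The main obstacle is essentially nonexistent: the corollary is a reformulation of Theorem \ref{theorem2} in the particular case of a singleton target set, and no new $\epsilon$--$\delta$ bookkeeping, iteration estimate, or fixed point argument is needed. The entire content lies in recognising that the point-to-set distance reduces to a metric distance when $E$ consists of a single element.
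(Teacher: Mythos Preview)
Your proposal is correct and matches the paper's treatment: the paper states this theorem immediately after Theorem~\ref{theorem2} without any proof, treating it as the obvious specialization obtained by taking $E$ to be a singleton so that $d(x_i,E)=d(x_i,x)$. Your only slip is the typo $T_iz=x$ where you mean $T_ix=x$, but the intent is clear and the argument is sound.
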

Now we construct an example which shows that if the sequence of errors does not convergence to $0$, then there exist  inexact orbits which do not converge.
 
Let $X$ be the set of all sequences $x=\{x_i\}_{i=1}^{\infty}$ of nonnegative numbers such that $\sum\limits_{i=1}^{\infty} x_i \leq 1$. For $x=\{x_i\}_{i=1}^{\infty}$, $y=\{y_i\}_{i=1}^{\infty} \in X$, set 
\begin{align}\label{eq}
d(x,y)= \sum\limits_{i=1}^{\infty} \vert x_i-y_i \vert^2.
\end{align}
Then $(X,d)$ is a complete $b$-metric space with coefficient $s=2$. Define $T:X \rightarrow X$ by
$$Tx=T(\{x_i\}_{i=1}^{\infty})=\Big(\frac{x_2}{2},\frac{x_3}{2},\frac{x_4}{2}, \ldots \Big).$$
Set $T^{0}x=x$ for all $x \in X$. Let $\mathfrak{F}$ be the set of all mappings $f: \mathbb{N}_0 \rightarrow \mathbb{N}_0$ and $T_i=T$ for all $i \in \mathbb{N}_0$. Define $\psi:[0,\infty) \rightarrow [0,\infty)$ by $\psi(t)=\frac{t}{3}$. Then $d(Tx,Ty) \leq \psi(\mbox{diam } \mathcal{O}_T(x,y))$ and $\mbox{diam } \mathcal{O}_T(Tx,Ty) \leq \mbox{diam } \mathcal{O}_T(x,y)$ for all $x,y \in X$. The following lemma is instrumental in constructing the example:
\begin{lemma}\label{lemma}
Let $w^{(0)}=\{w_i^{(0)}\}_{i=1}^{\infty} \in X$ be such that $\mbox{diam } \mathcal{O}_T(w^{(0)}) \leq \frac{1}{3}$ and $q \in \mathbb{N}_0$. Let $\{\gamma_i\}_{i=0}^{\infty}$ be a sequence of positive numbers  such that $\lim\limits_{i \rightarrow \infty}\gamma_i \neq 0$.  Then there exist a natural number $n \geq 4$ and a sequence $\{w^{(i)}\}_{i=0}^n \subset X$ such that
\begin{align*}
\mbox{diam } \mathcal{O}_T(w^{(i)}) &\leq \frac{1}{3} \thinspace \thinspace \mbox{for all } \thinspace i \in \{0,1,\ldots,n\},\\
\mbox{diam } \mathcal{O}_T(w^{(i+1)},Tw^{(i)})& \leq \gamma_{q+i}+\frac{1}{3} \thinspace \thinspace \mbox{for all } \thinspace i \in \{0,1,\ldots,n-1\},\\
d(w^{(n)},(0,0,\ldots))& \geq 5 \times 10^{-5}. 
\end{align*}
\end{lemma}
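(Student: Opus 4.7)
My plan is to follow the exact orbit of $T$ for most of the steps and insert a single targeted perturbation only at the last step. Concretely, I will take $n := 4$, set $w^{(i)} := T^i w^{(0)}$ for $i = 0, 1, 2, 3$, and choose $w^{(4)} := (\beta, 0, 0, \ldots)$ with $\beta := \sqrt{5 \times 10^{-5}}$. Since $\beta \le 1$, this ensures $w^{(4)} \in X$.

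With this choice, the first conclusion is routine: for $i \le 3$, $\mathcal{O}_T(w^{(i)}) = \mathcal{O}_T(T^i w^{(0)})$ is a subset of $\mathcal{O}_T(w^{(0)})$ and therefore has diameter at most $1/3$; for $i = 4$, $Tw^{(4)} = (0, 0, \ldots)$, so $\mathcal{O}_T(w^{(4)}) = \{w^{(4)}, (0, 0, \ldots)\}$ has diameter $\beta^2 = 5 \times 10^{-5}$. The third conclusion is the immediate equality $d(w^{(4)}, (0, 0, \ldots)) = \beta^2 = 5 \times 10^{-5}$. The cases $i \in \{0, 1, 2\}$ of the second conclusion are also immediate, since $w^{(i+1)} = Tw^{(i)}$ makes the double orbit coincide with $\mathcal{O}_T(Tw^{(i)})$, of diameter at most $1/3 \le \gamma_{q+i} + 1/3$.

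The only step requiring real estimation will be the second conclusion at $i = 3$: bounding the diameter of the double orbit $\{w^{(4)}, (0, 0, \ldots)\} \cup \mathcal{O}_T(T^4 w^{(0)})$. Here I would use the explicit decay estimate
\[
d(T^k w^{(0)}, (0, 0, \ldots)) \;=\; \frac{1}{4^k}\sum_{j \ge 1}\bigl(w^{(0)}_{k+j}\bigr)^2 \;\le\; \frac{1}{4^k},
\]
valid because $w^{(0)} \in X$ forces $w^{(0)}_j \in [0, 1]$. At $k = 4$ this places every point of $\mathcal{O}_T(T^4 w^{(0)})$ within $1/256$ of the origin, while $w^{(4)}$ is within $\beta^2 = 5 \times 10^{-5}$ of it. A single application of the $b$-metric triangle inequality (with $s = 2$) then bounds every pairwise distance in the double orbit by a quantity well under $1/3 \le \gamma_{q+3} + 1/3$, finishing the verification. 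This double-orbit estimate is the sole nontrivial computation; the rest is mechanical.
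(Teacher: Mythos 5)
Your construction is correct for the lemma as literally stated: taking $n=4$, $w^{(i)}=T^iw^{(0)}$ for $i\le 3$ and $w^{(4)}=(\beta,0,0,\ldots)$ with $\beta^2=5\times 10^{-5}$ does verify all three displayed conclusions, and the decay estimate $d(T^kw^{(0)},(0,0,\ldots))\le 4^{-k}$ together with one $b$-triangle inequality through the origin handles the only nontrivial case $i=3$. However, this is a genuinely different route from the paper's, and the difference is instructive. The paper iterates $T$ exactly for $m$ steps until the tail mass of $w^{(0)}$ is below $1/32$, then uses the hypothesis $\lim_{i\to\infty}\gamma_i\neq 0$ (hence $\sum_j\gamma_j=\infty$) to pick $n$ with $\sum_{j=q+m}^{q+n}\gamma_j\ge 1/32$, and injects at step $i$ a perturbation of size about $\gamma_{q+i}$ into a single coordinate; the accumulated mass $\tfrac12\sum_{j=q+m}^{q+n-1}\gamma_j\ge 1/128$ then forces $d(w^{(n)},(0,0,\ldots))\ge (1/128)^2\ge 5\times 10^{-5}$. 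Your proof never uses $\lim_{i\to\infty}\gamma_i\neq 0$ at all: instead of accumulating many small errors, you make one jump of size $\approx 5\times 10^{-5}$, which is admissible only because the stated bound $\gamma_{q+i}+\tfrac13$ has the built-in slack $\tfrac13$ coming from the inherent orbit diameters. So while your argument is shorter and formally suffices both for the lemma and for its use in the subsequent Example (which only invokes the stated inequalities with $\delta_i=\gamma_i+\tfrac13$), it exposes that the lemma's formulation is weaker than its intended content: the point of the example is that per-step computational errors of size $\gamma_i$ not tending to zero can destroy convergence, and the paper's construction keeps the genuine deviation $d(w^{(i+1)},Tw^{(i)})$ of order $\gamma_{q+i}$, so it would survive a sharpened statement in which the excess over $\tfrac13$ (or the distance $d(w^{(i+1)},Tw^{(i)})$ itself) must be at most $\gamma_{q+i}$; your single $5\times 10^{-5}$ jump would not, e.g. when all $\gamma_i<10^{-6}$ along the relevant indices. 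In short: correct as a proof of the statement, but it buys brevity at the cost of bypassing the accumulation-of-errors mechanism that the lemma is designed to encapsulate.
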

\begin{proof}
Without loss of generality we may assume that
\begin{align}\label{eq1}
\gamma_i \leq \frac{1}{64} \thinspace \thinspace \mbox{for all } \thinspace i \in \mathbb{N}_0.
\end{align}
There is a natural number $m>4$ such that
\begin{align}\label{eq2}
\sum\limits_{i=m}^{\infty} w_i^{(0)} \leq \frac{1}{32}.
\end{align}
Set $w^{(i+1)}=Tw^{(i)}$ for all $i \in \{0,1,\ldots, m-1\}$. Then 
$$w^{(m)}= \Big( \frac{w_{m+1}^{(0)}}{2^m},\frac{w_{m+2}^{(0)}}{2^m}, \frac{w_{m+3}^{(0)}}{2^m}, \ldots \Big).$$
Since $\lim\limits_{j \rightarrow \infty} \gamma_j\neq 0$, $\sum\limits_{j=0}^{\infty} \gamma_j = \infty$. Then there exists a natural number $n>m$ such that
\begin{align}\label{eq3}
\sum\limits_{j=q+m}^{q+n} \gamma_j \geq \frac{1}{32}.
\end{align}
By (\ref{eq1}) and (\ref{eq3}), $n \geq m+1$ and  without loss of generality we may assume that 
\begin{align}\label{eq4}
\sum_{j=q+m}^{q+n-1} \gamma_j < \frac{1}{32}.
\end{align}
Using (\ref{eq1}) and (\ref{eq3}) we have 
\begin{align}\label{eq5}
\sum\limits_{j=q+m}^{q+n-1} \gamma_j = \sum\limits_{j=q+m}^{q+n} \gamma_j -\gamma_{q+n} \geq \frac{1}{32}-\frac{1}{64}=\frac{1}{64}.
\end{align}
For $i \in \{m+1,m+2,\ldots,n\}$, define $w^{(i)}=\{w_j^{(i)} \}_{j=1}^{\infty}$ as follows:
\begin{align}
w_j^{(i)}&=\frac{w_{j+i}^{(0)}}{2^i} \thinspace \thinspace \mbox{for all } \thinspace j \in \mathbb{N} \setminus \{n+1-i\}, \nonumber\\
w_{n+1-i}^{(i)}&= \frac{w_{n+1}^{(0)}}{2^i}+\frac{1}{2} \sum\limits_{j=q+m}^{q+i-1} \gamma_j. \label{neweq2}
\end{align}
Evidently, $w^{(i)}$ is well defined for all $i \in \{m+1,m+2,\ldots,n\}$. By (\ref{eq2}) and (\ref{eq4}),
\begin{align*}
\sum_{j=1}^{\infty} w_j^{(i)}&= \frac{1}{2^i} \sum\limits_{j=i+1}^{\infty} w_j^{(0)}+ \frac{1}{2} \sum\limits_{j=q+m}^{q+i-1} \gamma_j\\
& \leq \sum\limits_{j=m}^{\infty} w_j^{(0)}+ \frac{1}{2} \sum\limits_{j=q+m}^{q+n-1} \gamma_j\\
& < \frac{3}{64} \leq 1
\end{align*}
which implies that $w^{(i)} \in X$ for all $i \in \{m+1,m+2,\ldots,n\}$. If $0 \leq i \leq m$, then
\begin{align}
\mbox{diam } \mathcal{O}_T(w^{(i)})&= \mbox{diam } \mathcal{O}_T(Tw^{(i-1)}) \nonumber\\
& \leq \mbox{diam } \mathcal{O}_T(w^{(i-1)}) \nonumber\\
& \quad \vdots \nonumber\\
& \leq \mbox{diam } \mathcal{O}_T(w^{(0)}) \nonumber\\
& \leq \frac{1}{3}. \label{neweq1}
\end{align}
Also, if $i \in \{m+1,m+2,\ldots,n\}$, then $\mbox{diam } \mathcal{O}_T(w^{(i)}) \leq \frac{1}{3}$. Now, we show that $\mbox{diam } \mathcal{O}_T(w^{(i+1)},Tw^{(i)}) \leq \gamma_{q+i}+\frac{1}{3}$ for all $i \in \{0,1,\ldots,n-1\}$. If $i \in \{0,1,\ldots,m-1\}$, then using (\ref{neweq1}) 
$$\mbox{diam } \mathcal{O}_T(w^{(i+1)},Tw^{(i)})= \mbox{diam } \mathcal{O}_T(w^{(i+1)}) \leq \frac{1}{3}\leq \gamma_{q+i}+\frac{1}{3}.$$
If $i \in \{m,m+1,\ldots,n-1\}$, then
$$\mbox{diam } \mathcal{O}_T(w^{(i+1)},Tw^{(i)}) = \sup\limits_{k, l \in \mathbb{N}_0} \{d(T^k w^{(i+1)},T^l(Tw^{(i)})),d(T^k w^{(i+1)},T^l w^{(i+1)}),d(T^k(Tw^{(i)}),T^l(Tw^{(i)}))\}.$$
We have
\[
w^{(i+1)}= \Big( \frac{w_{2+i}^{(0)}}{2^{i+1}},\frac{w_{3+i}^{(0)}}{2^{i+1}},\ldots, \frac{w_n^{(0)}}{2^{i+1}}, \underbrace{\frac{w_{n+1}^{(0)}}{2^{i+1}}+\frac{1}{2} \sum\limits_{j=q+m}^{q+i} \gamma_j}_\text{$(n-i)^{\mbox{th}}$ position}, \frac{w_{n+2}^{(0)}}{2^{i+1}}, \ldots \Big)
\]
and 
\[
Tw^{(i)}= \Big( \frac{w_{2+i}^{(0)}}{2^{i+1}},\frac{w_{3+i}^{(0)}}{2^{i+1}},\ldots, \frac{w_n^{(0)}}{2^{i+1}}, \underbrace{\frac{w_{n+1}^{(0)}}{2^{i+1}}+\frac{1}{4} \sum\limits_{j=q+m}^{q+i-1} \gamma_j}_\text{$(n-i)^{\mbox{th}}$ position}, \frac{w_{n+2}^{(0)}}{2^{i+1}}, \ldots \Big).
\]
Therefore, using (\ref{eq}) and (\ref{eq4}) we have 
\begin{align*}
d(w^{(i+1)},Tw^{(i)})&= \Big\vert \frac{1}{2} \gamma_{q+i}+\frac{1}{4} \sum\limits_{j=q+m}^{q+i-1} \gamma_j \Big\vert^2\\
& \leq \gamma_{q+i}^2+\frac{1}{4} \Big( \sum\limits_{j=q+m}^{q+n-1} \gamma_j \Big)^2\\
& < \gamma_{q+i}+\frac{1}{4} \Big( \frac{1}{32} \Big)^2\\
& \leq \gamma_{q+i}+\frac{1}{3}.
\end{align*}
Similarly, $d(T^k w^{(i+1)},T^l(Tw^{(i)})) \leq \gamma_{q+i}+\frac{1}{3}$, for all $k,l \in \mathbb{N}_0$. Also, for all $k,l \in \mathbb{N}_0$ 
\begin{align*}
d(T^kw^{(i+1)},T^lw^{(i+1)})& \leq \mbox{diam } \mathcal{O}_T(w^{(i+1)}) \leq \frac{1}{3},\\
d(T^k(Tw^{(i)}),T^l(Tw^{(i)}))& \leq \mbox{diam } \mathcal{O}_T(w^{(i)}) \leq \frac{1}{3}.
\end{align*}
Therefore, $\mbox{diam } \mathcal{O}_T(w^{(i+1)},Tw^{(i)}) \leq \gamma_{q+i}+\frac{1}{3}$ for all $i \in \{0,1,2,\ldots,n-1\}$. By (\ref{eq}) and (\ref{eq5}), (\ref{neweq2})
\begin{align*}
d(w^{(n)},(0,0,\ldots))&= \sum\limits_{j=1}^{\infty} (w_j^{(n)})^2\\
& \geq (w_1^{(n)})^2\\
&=(w_{n+1-n}^{(n)})^2\\
&= \Big( \frac{w_{n+1}^{(0)}}{2^n}+\frac{1}{2} \sum\limits_{j=q+m}^{q+n-1} \gamma_j \Big)^2\\
& \geq \frac{1}{4} \Big( \sum\limits_{j=q+m}^{q+n-1} \gamma_j \Big)^2\\
& \geq \frac{1}{4} \Big( \frac{1}{64} \Big)^2\\
& \geq 5 \times 10^{-5}.
\end{align*}  
\end{proof}
Let $\delta_i=\gamma_i+\frac{1}{3}$ for all $i \in \mathbb{N}_0$. Since $\lim\limits_{i \rightarrow \infty}\gamma_i \neq 0$, $\lim\limits_{i \rightarrow \infty} \delta_i \neq 0$.
\begin{example}
Let $\{\delta_i\}_{i=0}^{\infty}$ be a sequence of positive numbers such that $\lim\limits_{i \rightarrow \infty} \delta_i \neq 0$. Let $x=\{x_i\}_{i=1}^{\infty} \in X$ such that $\mbox{diam } \mathcal{O}_T(x) \leq \frac{1}{3}$. Then there exist a  sequence $\{z^{(i)} \}_{i=0}^{\infty} \subset X$ such that $z^{(0)}=x$, 
\begin{align*}
\mbox{diam } \mathcal{O}_T(z^{(i)}) & \leq \frac{1}{3} \thinspace \thinspace \mbox{for all } \thinspace i \in \mathbb{N}_0,\\
\mbox{diam } \mathcal{O}_T(z^{(i+1)},Tz^{(i)})& \leq \delta_i \thinspace \thinspace \mbox{for all } \thinspace i \in \mathbb{N}_0 
\end{align*}
and a sequence of nonnegative integers $\{t_k\}_{k=0}^{\infty}$ such that
$$d(z^{(t_k)},(0,0,\ldots)) \geq 5 \times 10^{-5}\thinspace \thinspace \mbox{for all } \thinspace k \in \mathbb{N}.$$
\end{example}
\begin{proof}
Using induction we construct a sequence $\{z^{(i)}\}_{i=0}^{\infty} \subset X$ and a sequence of nonnegative integers $\{t_k\}_{k=0}^{\infty}$ such that
\begin{equation}\label{neweq3}
\left\{
\begin{aligned}
z^{(0)}&=x,\\
\mbox{diam } \mathcal{O}_T(z^{(i)}) & \leq \frac{1}{3} \thinspace \thinspace \mbox{for all } \thinspace i \in \mathbb{N}_0,\\
\mbox{diam } \mathcal{O}_T(z^{(i+1)},Tz^{(i)}) & \leq \delta_i \thinspace \thinspace \mbox{for all } \thinspace i \in \mathbb{N}_0,\\
t_0=0, \thinspace t_k&<t_{k+1} \thinspace \thinspace \mbox{for all } \thinspace k \in \mathbb{N}_0,\\
d(z^{(t_k)},(0,0,\ldots))& \geq 5 \times 10^{-5}. 
\end{aligned} \right.
\end{equation}
Set $z^{(0)}=x$ and $t_0=0$. Suppose that the result holds for some $p \in \mathbb{N}$. Then we have already defined a sequence $\{z^{(i)}\}_{i=0}^{t_p} \subset X$ and a sequence of nonnegative integers $\{t_k\}_{k=0}^p$ such that
\begin{align*}
z^{(0)}&=x\\
\mbox{diam } \mathcal{O}_T(z^{(i)}) & \leq \frac{1}{3} \thinspace \thinspace \mbox{for all } \thinspace i \in \{0,1,\ldots,t_p\},\\
\mbox{diam } \mathcal{O}_T(z^{(i+1)},Tz^{(i)}) & \leq \delta_i \thinspace \thinspace \mbox{for all } \thinspace i \in \{0,1,\ldots,t_p-1\},\\
t_0=0, \thinspace t_k &< t_{k+1} \thinspace \thinspace \mbox{for all } \thinspace k \in \{0,1,\ldots,p-1\},\\
d(z^{(t_k)},(0,0,\ldots)) & \geq 5 \times 10^{-5} \thinspace \thinspace \mbox{for all } \thinspace k \in \{1,2,\ldots,p\}. 
\end{align*}
Now, we show that this assumption holds for $p+1$. Applying Lemma \ref{lemma} with $w^{(0)}=z^{(t_p)}$ and $q=t_p$, there exist a natural number $n \geq 4$ and a sequence $\{z^{(i)}\}_{i=t_p}^{t_p+n} \subset X$ such that
\begin{align*}
\mbox{diam } \mathcal{O}_T(z^{(i)}) & \leq \frac{1}{3} \thinspace \thinspace \mbox{for all } \thinspace i \in \{t_p,t_{p}+1,\ldots,t_p+n\},\\
\mbox{diam } \mathcal{O}_T(z^{(i+1)},Tz^{(i)})& \leq \delta_i \thinspace \thinspace \mbox{for all } \thinspace i \in \{t_p,t_{p}+1,\ldots,t_p+n-1\},\\
d(z^{(t_p+n)},(0,0,\ldots))& \geq 5 \times 10^{-5}.
\end{align*}
Put $t_{p+1}=t_p+n$. Then $t_p<t_{p+1}$. In this way, we have constructed a sequence $\{z^{(i)}\}_{i=0}^{t_{p+1}} \subset X$ and a sequence of nonnegative integers $\{t_k\}_{k=0}^{p+1}$ such that
\begin{align*}
z^{(0)}&=x,\\
\mbox{diam } \mathcal{O}_T(z^{(i)}) & \leq \frac{1}{3} \thinspace \thinspace \mbox{for all } \thinspace i \in \{0,1,\ldots,t_{p+1}\},\\
\mbox{diam } \mathcal{O}_T(z^{(i+1)},Tz^{(i)}) & \leq \delta_i \thinspace \thinspace \mbox{for all } \thinspace i \in \{0,1,\ldots, t_{p+1}-1\},\\
t_0=0, \thinspace t_k &< t_{k+1} \thinspace \thinspace \mbox{for all } \thinspace k \in \{0,1,\ldots,p\},\\
d(z^{(t_k)},(0,0,\ldots))& \geq 5 \times 10^{-5} \thinspace \thinspace \mbox{for all } \thinspace k \in \{1,2,\ldots,p+1\}.
\end{align*}
Therefore, the assumption holds for $p+1$. This implies that we have constructed a sequence $\{z^{(i)}\}_{i=0}^{\infty} \subset X$ and a sequence of nonnegative integers $\{t_k\}_{k=0}^{\infty}$ satisfying (\ref{neweq3}).
\end{proof} 
Now we establish the convergence results of inexact orbits by assuming the convergence of exact orbits only on bounded subsets of a $b$-metric space $(X,d)$.
\begin{definition}
\emph{Let $(X,d)$ be a $b$-metric space. A mapping $T: X \rightarrow X$ is said to be $\mathcal{O}$-$b$ continuous if for a sequence $\{x_n\} \subset X$, $x_n \rightarrow x$ in $(X,d)$ implies that $\mbox{diam } \mathcal{O}_T(x_n,x) \rightarrow 0$.}
\end{definition}
\begin{theorem}\label{theorem3}
Let $(X,d)$ be a complete $b$-metric space with coefficient $s \geq 1$. Let $E$ be a nonempty, bounded and closed subset of $X$. For each $i \in \mathbb{N}_0$, let $T_i:X \rightarrow X$ be a weak quasi-contraction mapping, $\mathcal{O}$-$b$ continuous and 
\begin{align}
\mbox{diam } \mathcal{O}_{T_j}(T_ix,T_iy) & \leq \mbox{diam } \mathcal{O}_{T_j}(x,y) \thinspace \thinspace \mbox{for all } \thinspace x,y \in X, \thinspace f \in \mathfrak{F} \thinspace \thinspace \mbox{and } \thinspace i,j \in \mathbb{N}_0. \label{equation20}
\end{align}
Let $\mathfrak{F}$ be a nonempty set of mappings $f: \mathbb{N}_0 \rightarrow \mathbb{N}_0$ with property $(\mathcal{F})$. Suppose  that the following properties holds:

$(\mathcal{P}_3)$ for each $e \in E$ and each $i \in \mathbb{N}_0$, there exists $e' \in E$ such that $T_i(e')=e$.

$(\mathcal{P}_4)$ for each $\epsilon,K>0$, there exists a natural number $n_{\epsilon,K}$ such that for each $f \in \mathfrak{F}$ and each $x \in X$ we have			
\begin{align*}
\mbox{diam } \mathcal{O}_{T_i}(x, \theta) \leq \frac{K}{s} \thinspace \thinspace \mbox{for all } \thinspace i \in \mathbb{N}_0,\\
d(T_{f(n_{\epsilon,K})}T_{f(n_{\epsilon, K}-1)} \ldots T_{f(1)}T_{f(0)}x,E) < \epsilon. 
\end{align*}
Then for each $\epsilon,K>0$, there exist $\delta>0$ and $\hat{n} \in \mathbb{N}$ such that for each $f \in \mathfrak{F}$ and each sequence $\{x_i \}_{i=0}^{\infty} \subset X$ satisfying
\begin{align}\label{equation23}
\mbox{diam } \mathcal{O}_{T_i}(x_0,\theta) \leq \frac{K}{s} \thinspace \thinspace \mbox{for all } \thinspace i \in \mathbb{N}_0
\end{align}
and
\begin{align}\label{equation24}
\mbox{diam } \mathcal{O}_{T_{f(j)}}(x_{i+1},T_{f(i)}x_i) \leq \delta \thinspace \thinspace \mbox{for all } \thinspace i,j \in \mathbb{N}_0,
\end{align}
the following inequality is satisfied
$$d(x_i,E)< \epsilon \thinspace \thinspace \mbox{for all } \thinspace i \geq \hat{n}.$$
\end{theorem}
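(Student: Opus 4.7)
The plan is to mirror the shift-and-compare scheme from the proof of Theorem~\ref{theorem1} essentially verbatim, with one substantive new ingredient: the exact-orbit property $(\mathcal{P}_4)$ now carries a boundedness hypothesis that must be verified for the shifted starting point. Given $\epsilon,K>0$, I would first fix an auxiliary constant $K^{\ast}>0$ (pinned down in the last paragraph), apply $(\mathcal{P}_4)$ with parameters $\epsilon/(2s)$ and $K^{\ast}$ to obtain $n_{\epsilon/(2s),K^{\ast}}\in\mathbb{N}$, set $n_0=n_{\epsilon/(2s),K^{\ast}}+2$, and choose $\delta>0$ with $\delta<\epsilon/(2n_0 s^{n_0})$ together with an additional smallness requirement emerging from the uniform orbit-size estimate below. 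Set $\hat{n}=n_0$.

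For $n\geq\hat{n}$ and a sequence $\{x_i\}$ satisfying (\ref{equation23}) and (\ref{equation24}), I would introduce the shifted data $z_i=x_{i+n-n_0+1}$ and $\tilde{f}(i)=f(i+n-n_0+1)$, so that $z_{n_0-1}=x_n$, $\tilde{f}\in\mathfrak{F}$ by $(\mathcal{F})$, and the transferred inequality $\mathrm{diam}\,\mathcal{O}_{T_{\tilde{f}(j)}}(z_{i+1},T_{\tilde{f}(i)}z_i)\leq\delta$ holds for all $i,j\in\mathbb{N}_0$. Defining the exact comparison orbit $y_0=z_0$, $y_{i+1}=T_{\tilde{f}(i)}y_i$, a direct copy of the induction in Theorem~\ref{theorem1} (with (\ref{equation20}) playing the role of (\ref{equation2})) yields
\[
\mathrm{diam}\,\mathcal{O}_{T_{\tilde{f}(j)}}(z_i,y_i)\leq (i+1) s^i\delta,\quad 0\leq i\leq n_0-1.
\]
Once the admissibility of $z_0$ for $(\mathcal{P}_4)$ is checked, the property gives $d(y_{n_0-1},E)<\epsilon/(2s)$, and two applications of the $b$-triangle inequality close the argument via
\[
d(x_n,E)\leq s\,\mathrm{diam}\,\mathcal{O}_{T_{\tilde{f}(j)}}(z_{n_0-1},y_{n_0-1})+s\,d(y_{n_0-1},E)\leq n_0 s^{n_0}\delta+\tfrac{\epsilon}{2}<\epsilon.
\]

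The principal obstacle is checking that $z_0$ satisfies the hypothesis of $(\mathcal{P}_4)$, i.e., $\mathrm{diam}\,\mathcal{O}_{T_i}(z_0,\theta)\leq K^{\ast}/s$ for all $i\in\mathbb{N}_0$. Since $n$ is arbitrary, this reduces to a uniform bound on $\mathrm{diam}\,\mathcal{O}_{T_i}(x_m,\theta)$ valid for every $m\geq 0$ and $i\in\mathbb{N}_0$. I would establish this by induction on $m$: the base case is (\ref{equation23}), and for the inductive step the $b$-triangle inequality applied at the level of diameters gives
\[
\mathrm{diam}\,\mathcal{O}_{T_i}(x_{m+1},\theta)\leq s\bigl[\mathrm{diam}\,\mathcal{O}_{T_i}(x_{m+1},T_{f(m)}x_m)+\mathrm{diam}\,\mathcal{O}_{T_i}(T_{f(m)}x_m,\theta)\bigr],
\]
with the first summand bounded by $\delta$ via (\ref{equation24}). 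For the second summand I would invoke $(\mathcal{P}_3)$ to choose $\theta'\in E$ with $T_{f(m)}\theta'=\theta$, then apply the monotonicity (\ref{equation20}) to replace $(T_{f(m)}x_m,\theta)$ by $(x_m,\theta')$, and finally bound $\mathrm{diam}\,\mathcal{O}_{T_i}(x_m,\theta')$ by $\mathrm{diam}\,\mathcal{O}_{T_i}(x_m,\theta)$ plus a term of size $\mathrm{diam}\,E$, using boundedness of $E$ and, where needed, the $\mathcal{O}$-$b$ continuity of the $T_i$ to control the orbits of points of $E$ uniformly in $i$. A suitable choice of $K^{\ast}$ depending on $K$, $s$, and $\mathrm{diam}\,E$, together with a sufficiently small $\delta$, then closes the induction. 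I expect this orbit-size bound to be the main technical difficulty; the remainder of the proof is a direct transcription of Theorem~\ref{theorem1}.
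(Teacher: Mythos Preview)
Your overall plan --- shift, compare to an exact orbit, and conclude with the $(i+1)s^{i}\delta$ estimate --- is sound and is what the paper ultimately relies on as well. The genuine gap is in the step you correctly flag as the principal obstacle: the proposed induction for a uniform-in-$m$ bound on $\mathrm{diam}\,\mathcal{O}_{T_i}(x_m,\theta)$ does not close. After inserting $T_{f(m)}x_m$ via the $b$-triangle inequality, invoking $(\mathcal{P}_3)$ (which, incidentally, already requires $\theta\in E$) and (\ref{equation20}), and then passing from $\theta'$ back to $\theta$, the recursion takes the form $a_{m+1}\le s\delta + s^{2}a_{m} + s^{2}C$; even the cleaner variant with $b_{m}=\sup_{e\in E}\mathrm{diam}\,\mathcal{O}_{T_i}(x_{m},e)$ yields $b_{m+1}\le s\delta+s\,b_{m}$. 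For $s>1$ both grow like $s^{m}$ no matter how small $\delta$ is, and for $s=1$ they still grow linearly in $m$. Hence no fixed $K^{\ast}$ depending only on $K$, $s$ and $\mathrm{diam}\,E$ can serve for all $m$, and since $z_{0}=x_{n-n_{0}+1}$ with $n$ arbitrarily large, you cannot feed $z_{0}$ into $(\mathcal{P}_4)$ with a single $K^{\ast}$. This is exactly where Theorem~\ref{theorem3} differs from Theorem~\ref{theorem1}: property $(\mathcal{P}_1)$ there carries no boundedness hypothesis, so no such uniform estimate was needed.

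The paper circumvents this with a different construction. It introduces the auxiliary set
\[
Z=\bigl\{x\in X:\mathrm{diam}\,\mathcal{O}_{T_{f(i)}}(x,e)\le s^{k}(2K-1)\ \text{for some }k\in\mathbb{N}\ \text{and all }f\in\mathfrak{F},\ i\in\mathbb{N}_{0},\ e\in E\bigr\},
\]
whose ``for some $k$'' clause is designed precisely to absorb the exponential growth above. It then shows $(Z,d)$ is complete (this is the only place the $\mathcal{O}$-$b$ continuity hypothesis is used --- an assumption your argument never invokes) and $T_{j}$-invariant via $(\mathcal{P}_3)$ and (\ref{equation20}), verifies through a finite-block estimate $(\mathcal{P}_6)$ that the inexact orbit $\{x_i\}$ lies in $Z$, and finally applies Theorem~\ref{theorem1} to $(Z,d)$ as a black box rather than reproving it. The shift-and-compare machinery you describe is thus hidden inside that invocation; the new content of the proof is the construction of $Z$ and the verification that the sequence stays inside it.
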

\begin{proof}
Suppose that $\epsilon,K>0$ are given. Without loss of generality we may assume that
\begin{align}\label{equation25}
K>2 \thinspace \thinspace \mbox{and } \thinspace \mbox{diam } \mathcal{O}_{T_i}(\theta, e) \leq \frac{K-2}{s} \thinspace \thinspace \mbox{for all } \thinspace i \in \mathbb{N}_0 \thinspace \thinspace \mbox{and } \thinspace e \in E.
\end{align}
Let 
$$Z= \{ x \in X: \mbox{diam } \mathcal{O}_{T_{f(i)}}(x,e) \leq s^k (2K-1) \thinspace \thinspace \mbox{for some } \thinspace k \in \mathbb{N} \thinspace \thinspace \mbox{and for all } \thinspace f \in \mathfrak{F}, i \in \mathbb{N}_0 \thinspace \mbox{ and } \thinspace e \in E \}.$$
We shall show that $(Z,d)$ is a complete $b$-metric space and $T_j(Z) \subset Z$ for all $j \in \mathbb{N}_0$. Let $\{x_n\}$ be  a Cauchy sequence in $(Z,d)$. As $(X,d)$ is a complete $b$-metric space, there exists $x \in X$ such that $\lim\limits_{n \rightarrow \infty}d(x_n,x)=0$. Since the orbits are bounded,
$$\mbox{diam } \mathcal{O}_{T_{f(i)}}(x,e)= \sup\limits_{l,m \in \mathbb{N}_0} \{ d(T_{f(i)}^lx, T_{f(i)}^me),d(T_{f(i)}^lx,T_{f(i)}^mx),d(T_{f(i)}^le,T_{f(i)}^me)\}.$$
Consider
\begin{align*}
d(T_{f(i)}^lx,T_{f(i)}^me)& \leq sd(T_{f(i)}^lx, T_{f(i)}^lx_n)+sd(T_{f(i)}^lx_n,T_{f(i)}^me)\\
&\leq s \thinspace \mbox{diam } \mathcal{O}_{T_{f(i)}}(x,x_n)+s \thinspace \mbox{diam } \mathcal{O}_{T_{f(i)}}(x_n,e).
\end{align*}
Since $T_i$ is $\mathcal{O}$-$b$ continuous and $x_n \rightarrow x$ in $(X,d)$, $\mbox{diam } \mathcal{O}_{T_i}(x_n,x) \rightarrow 0$ in $(X,d)$. Also, $x_n \in Z$ implies that $\mbox{diam } \mathcal{O}_{T_{f(i)}}(x_n,e) \leq s^k(2K-1)$ for some $k \in \mathbb{N}$. This implies that
$$d(T_{f(i)}^lx,T_{f(i)}^me) \leq s^{k+1}(2K-1).$$
Now consider 
\begin{align*}
d(T_{f(i)}^lx, T_{f(i)}^mx)& \leq \mbox{diam } \mathcal{O}_{T_{f(i)}}(x)\\
& \leq \mbox{diam } \mathcal{O}_{T_{f(i)}}(x,x_n) \rightarrow 0 \thinspace \thinspace \mbox{as } \thinspace n \rightarrow \infty.
\end{align*} 
Also, consider
\begin{align*}
d(T_{f(i)}^le,T_{f(i)}^me) & \leq \mbox{diam } \mathcal{O}_{T_{f(i)}}(e) \leq \mbox{diam } \mathcal{O}_{T_{f(i)}}(x_n,e) \leq s^k(2K-1).
\end{align*}
Therefore, for all $f \in \mathfrak{F}$, $i \in \mathbb{N}_0$ and $e \in E$ we have
$$\mbox{diam } \mathcal{O}_{T_{f(i)}}(x,e) \leq s^{k+1}(2K-1)$$
which implies that $x \in Z$. Now we prove that $T_j(Z) \subset Z$. Let $x \in Z$, then $\mbox{diam } \mathcal{O}_{T_{f(i)}}(x,e) \leq s^k(2K-1)$ for some $k \in \mathbb{N}$ and for all $f \in \mathfrak{F}$, $i \in \mathbb{N}_0$ and $e \in E$. Since $e \in E$, by property $(\mathcal{P}_3)$ for each $j \in \mathbb{N}_0$ there exists $e' \in E$ such that $T_je'=e$. Therefore, by (\ref{equation20})
\begin{align*}
\mbox{diam } \mathcal{O}_{T_{f(i)}}(T_jx,e)&= \mbox{diam } \mathcal{O}_{T_{f(i)}}(T_jx,T_je')\\
& \leq \mbox{diam } \mathcal{O}_{T_{f(i)}}(x,e')\\
& \leq s^{k'}(2K-1),
\end{align*}
for some $k' \in \mathbb{N}$ which gives $T_jx \in Z$. Therefore, $T_j(Z) \subset Z$ for all $j \in \mathbb{N}_0$. Thus, all the assumptions of Theorem \ref{theorem1} are satisfied for the complete $b$-metric space $(Z,d)$ and the restrictions of $T_i$ to $Z$, $i \in \mathbb{N}_0$. Therefore, by Theorem \ref{theorem1} there exist $\delta>0$ and $\hat{n} \in \mathbb{N}$ such that the following property is satisfied:

$(\mathcal{P}_5)$ for each $f \in \mathfrak{F}$ and each sequence $\{x_i\}_{i=0}^{\infty} \subset Z$ such that
$$\mbox{diam } \mathcal{O}_{T_{f(j)}}(x_{i+1},T_{f(i)}x_i) \leq \delta \thinspace \thinspace \mbox{for all } \thinspace i,j \in \mathbb{N}_0,$$
the following inequality is satisfied
$$d(x_i,E) < \epsilon \thinspace \thinspace \mbox{for all } \thinspace i \geq \hat{n}.$$
Without loss of generality we may assume that
\begin{align}\label{equation26}
\hat{n} \delta<1.
\end{align}
We claim that the following property is satisfied:

$(\mathcal{P}_6)$ if $f \in \mathfrak{F}$ and a sequence $\{x_i\}_{i=0}^{\hat{n}} \subset X$ satisfies
\begin{align}\label{equation27}
\mbox{diam } \mathcal{O}_{T_i}(x_0,e) \leq 2K-2 \thinspace \thinspace \mbox{for all } \thinspace i \in \mathbb{N}_0, e \in E
\end{align}
and for all $i\in \{0,1,\ldots,\hat{n}-1\}$
\begin{align}\label{equation28}
\mbox{diam } \mathcal{O}_{T_{f(j)}}(x_{i+1},T_{f(i)}x_i) \leq \delta \thinspace \thinspace \mbox{for all } \thinspace j \in \mathbb{N}_0,
\end{align}
then $\{x_i\}_{i=0}^{\hat{n}} \subset Z$.  

Suppose that $f \in \mathfrak{F}$ and the sequence $\{x_i\}_{i=0}^{\hat{n}} \subset X$ satisfies (\ref{equation27}) and (\ref{equation28}). Then for each $i\in \{0,1, \ldots,\hat{n}-1\}$ 
$$\mbox{diam } \mathcal{O}_{T_{f(j)}}(x_{i+1},e)= \sup\limits_{l,m \in \mathbb{N}_0} \{d(T_{f(j)}^lx_{i+1},T_{f(j)}^me),d(T_{f(j)}^lx_{i+1},T_{f(j)}^mx_{i+1}),d(T_{f(j)}^le,T_{f(j)}^me)\}.$$
Consider 
\begin{align*}
d(T_{f(j)}^lx_{i+1},T_{f(j)}^me) & \leq s d(T_{f(j)}^lx_{i+1},T_{f(j)}^l T_{f(i)}x_i)+s(T_{f(j)}^lT_{f(i)}x_i,T_{f(j)}^me)\\
& \leq s \thinspace \mbox{diam } \mathcal{O}_{T_{f(j)}}(x_{i+1},T_{f(i)}x_i)+ s \thinspace \mbox{diam } \mathcal{O}_{T_{f(j)}}(T_{f(i)}x_i,e)
\end{align*}
Since $e \in E$, by property $(\mathcal{P}_3)$ there exists $\bar{e} \in E$ such that $T_{f(i)}\bar{e}=e$. Using (\ref{equation20}) and (\ref{equation24}) we have
\begin{align*}
d(T_{f(j)}^lx_{i+1},T_{f(j)}^me)& \leq s\delta+ s \thinspace \mbox{diam } \mathcal{O}_{T_{f(j)}}(x_i, \bar{e}).
\end{align*}
Also, using (\ref{equation24}) 
\begin{align*}
d(T_{f(j)}^lx_{i+1},T_{f(j)}^mx_{i+1}) & \leq \mbox{diam } \mathcal{O}_{T_{f(j)}}(x_{i+1}) \leq \mbox{diam } \mathcal{O}_{T_{f(j)}}(x_{i+1},T_{f(i)}x_i) \leq \delta
\end{align*}
and by (\ref{equation20})
\begin{align*}
d(T_{f(j)}^le,T_{f(j)}^me)& \leq \mbox{diam } \mathcal{O}_{T_{f(j)}}(e) \leq \mbox{diam } \mathcal{O}_{T_{f(j)}}(T_{f(i)}x_i,e) \leq \mbox{diam } \mathcal{O}_{T_{f(j)}}(x_i,\bar{e}).
\end{align*}
Therefore, $\mbox{diam }\mathcal{O}_{T_{f(j)}}(x_{i+1},e) \leq s \delta+s \thinspace \mbox{diam } \mathcal{O}_{T_{f(j)}}(x_i, \bar{e})$. This implies that for all $i\in \{0,1, \ldots,\hat{n}\}$
\begin{align*}
\mbox{diam } \mathcal{O}_{T_{f(j)}}(x_i,e) & \leq s \delta+ s \thinspace \mbox{diam } \mathcal{O}_{T_{f(j)}}(x_{i-1},\bar{e})\\
&\leq (s+s^2)\delta+s^2 \thinspace \mbox{diam } \mathcal{O}_{T_{f(j)}}(x_{i-2},\breve{e})\\
& \thinspace \quad \vdots\\
& \leq  (s+s^2+\ldots+s^i)\delta+s^i \thinspace \mbox{diam } \mathcal{O}_{T_{f(j)}}(x_0, \tilde{e})\\
& \leq  i s^i \delta+s^i \thinspace  \mbox{diam } \mathcal{O}_{T_{f(j)}}(x_0, \tilde{e})\\
& \leq \hat{n} s^{\hat{n}} \delta+ s^{\hat{n}} \thinspace \mbox{diam } \mathcal{O}_{T_{f(j)}}(x_0, \tilde{e}).
\end{align*}
Using (\ref{equation26}) and (\ref{equation27}) we deduce that
$$\mbox{diam } \mathcal{O}_{T_{f(j)}}(x_i,e) \leq s^{\hat{n}}(2K-1)$$
which implies that $\{x_i\}_{i=1}^{\hat{n}} \subset Z$. This proves that property $(\mathcal{P}_6)$ is satisfied. Suppose that $f \in \mathfrak{F}$ and the sequence $\{x_i\}_{i=0}^{\infty} \subset X$ satisfies (\ref{equation23}) and (\ref{equation24}). Consider
$$\mbox{diam } \mathcal{O}_{T_i}(x_0,e)= \sup\limits_{l,m \in \mathbb{N}} \{d(T_i^lx_0,T_i^me),d(T_i^l x_0,T_i^m x_0),d(T_i^le,T_i^me)\}.$$
By (\ref{equation23}) and (\ref{equation25}) we have 
\begin{align*}
d(T_i^lx_0,T_i^me)& \leq s d(T_i^lx_0,T_i^l \theta)+sd(T_i^l \theta, T_i^m e)\\
& < s \thinspace \mbox{diam } \mathcal{O}_{T_i}(x_0,\theta)+s \thinspace \mbox{diam } \mathcal{O}_{T_i}(\theta,e)\\
&\leq 2K-2.
\end{align*}
Also, using (\ref{equation23})
\begin{align*}
d(T_i^lx_0,T_i^mx_0)& \leq \mbox{diam } \mathcal{O}_{T_i}(x_0)\leq \mbox{diam } \mathcal{O}_{T_i}(x_0,\theta) \leq \frac{K}{s}
\end{align*}
and using (\ref{equation25})
\begin{align*}
d(T_i^le,T_i^me)& \leq \mbox{diam } \mathcal{O}_{T_i}(e) \leq \mbox{diam } \mathcal{O}_{T_i}(\theta,e) \leq \frac{K-2}{s}.
\end{align*}
Therefore, $\mbox{diam } \mathcal{O}_{T_i}(x_0,e) \leq 2K-2$ which implies that (\ref{equation27}) holds for all $i \in \mathbb{N}_0$ and $e \in E$. Let $p \in \mathbb{N}_0$ and
$$\mbox{diam } \mathcal{O}_{T_i}(x_p,e) \leq 2K-2 \thinspace \thinspace \mbox{for all } \thinspace i \in \mathbb{N}_0 \thinspace \thinspace \mbox{and } \thinspace e \in E.$$
Then by property $(\mathcal{P}_6)$ we infer that $\{x_i\}_{i=p}^{p+\hat{n}} \subset Z$ and $\mbox{diam } \mathcal{O}_{T_i}(x_{p+\hat{n}},e) \leq s^{\hat{n}}(2K-1)$. This is true for all $p \in \mathbb{N}_0$. Therefore, 
$$\mbox{diam } \mathcal{O}_{T_i}(x_j,e) \leq s^k(2K-1)$$
for some $k \in \mathbb{N}$, and for all $i,j \in \mathbb{N}_0$ and $e \in E$ which implies that $\{x_i\}_{i=0}^{\infty} \subset Z$. Together with (\ref{equation24}) and property $(\mathcal{P}_5)$ we conclude that $d(x_i,E)< \epsilon$ for all $i \geq \hat{n}$.
\end{proof}
\begin{theorem}\label{theorem4}
Let $(X,d)$ be a complete $b$-metric space with coefficient $s \geq 1$. Let $E$ be a nonempty, bounded and closed subset of $X$. For each $i \in  \mathbb{N}_0$, let $T_i:X \rightarrow X$ be a weak quasi-contraction mapping, $\mathcal{O}$-$b$ continuous and satisfy  and $(\ref{equation20})$. Let $\mathfrak{F}$ be a nonempty set of mappings $f: \mathbb{N}_0 \rightarrow \mathbb{N}_0$ with  property $(\mathcal{F})$. Suppose  that properties $(\mathcal{P}_3)$ and  $(\mathcal{P}_4)$ hold. 
Let $\{ \delta_i\}_{i=0}^{\infty}$ be a sequence of positive numbers such that $\lim\limits_{i \rightarrow \infty} \delta_i=0$.
Then for each $\epsilon,K>0$, there exist $\delta>0$ and $\tilde{n} \in \mathbb{N}$ such that for each $f \in \mathfrak{F}$ and each sequence $\{x_i \}_{i=0}^{\infty} \subset X$ satisfying
\begin{align}\label{equation33}
\mbox{diam } \mathcal{O}_{T_j}(x_i,\theta) \leq \frac{K}{s} \thinspace \thinspace \mbox{for all } \thinspace i,j \in \mathbb{N}_0
\end{align}
and
\begin{align}\label{equation34}
\mbox{diam } \mathcal{O}_{T_{f(j)}}(x_{i+1},T_{f(i)}x_i) \leq \delta_i \thinspace \thinspace \mbox{for all } \thinspace i,j \in \mathbb{N}_0,
\end{align}
the following inequality is satisfied
$$d(x_i,E)< \epsilon \thinspace \thinspace \mbox{for all } \thinspace i \geq \tilde{n}.$$
\end{theorem}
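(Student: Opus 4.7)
The plan is to deduce Theorem \ref{theorem4} from Theorem \ref{theorem3} in exactly the same way that Theorem \ref{theorem2} was deduced from Theorem \ref{theorem1}. Given $\epsilon, K > 0$, I would first apply Theorem \ref{theorem3} to obtain $\delta > 0$ and $\hat{n} \in \mathbb{N}$ such that whenever $f \in \mathfrak{F}$ and a sequence $\{y_i\}_{i=0}^{\infty} \subset X$ satisfies $\mbox{diam } \mathcal{O}_{T_i}(y_0,\theta) \leq K/s$ for all $i \in \mathbb{N}_0$ together with $\mbox{diam } \mathcal{O}_{T_{f(j)}}(y_{i+1},T_{f(i)}y_i) \leq \delta$ for all $i,j \in \mathbb{N}_0$, we have $d(y_i,E) < \epsilon$ for all $i \geq \hat{n}$.

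Since $\lim_{i \to \infty} \delta_i = 0$, I would pick $n_1 \in \mathbb{N}$ with $\delta_i < \delta$ for all $i \geq n_1$ and then set $\tilde{n} = n_1 + \hat{n}$. Given $f \in \mathfrak{F}$ and a sequence $\{x_i\}_{i=0}^{\infty}$ satisfying (\ref{equation33}) and (\ref{equation34}), and an integer $n \geq \tilde{n}$, the goal is to show $d(x_n,E) < \epsilon$. The standard shift trick is to set $z_i = x_{i+n_1}$ and $\overline{f}(i) = f(i+n_1)$ for all $i \in \mathbb{N}_0$. Property $(\mathcal{F})$ ensures $\overline{f} \in \mathfrak{F}$. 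Because the hypothesis (\ref{equation33}) is uniform over \emph{all} indices $i, j$ (this is the strengthening compared with Theorem \ref{theorem3}'s hypothesis that only concerned $x_0$), the shifted sequence automatically satisfies $\mbox{diam }\mathcal{O}_{T_j}(z_0,\theta) = \mbox{diam }\mathcal{O}_{T_j}(x_{n_1},\theta) \leq K/s$ for all $j \in \mathbb{N}_0$, so condition (\ref{equation23}) of Theorem \ref{theorem3} holds with $z_0$ in place of $x_0$.

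For the inexactness bound, using (\ref{equation34}) together with the choice of $n_1$, I would compute
\begin{align*}
\mbox{diam }\mathcal{O}_{T_{\overline{f}(j)}}(z_{i+1},T_{\overline{f}(i)}z_i) &= \mbox{diam }\mathcal{O}_{T_{f(j+n_1)}}(x_{i+n_1+1},T_{f(i+n_1)}x_{i+n_1})\\
&\leq \delta_{i+n_1} < \delta
\end{align*}
for all $i, j \in \mathbb{N}_0$, so (\ref{equation24}) is satisfied with constant $\delta$ for the shifted data. Applying Theorem \ref{theorem3} to $(\overline{f}, \{z_i\})$ yields $d(z_i, E) < \epsilon$ for all $i \geq \hat{n}$. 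Since $n \geq \tilde{n} = n_1 + \hat{n}$ gives $n - n_1 \geq \hat{n}$, we conclude $d(x_n, E) = d(z_{n-n_1}, E) < \epsilon$.

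There is no genuine obstacle here: both the bounded-orbit hypothesis and the inexactness hypothesis are of the kind that is preserved under a finite index shift because of property $(\mathcal{F})$, and the uniform form of (\ref{equation33}) is tailored precisely so that the shifted $z_0$ still lies inside the $K/s$-orbit ball required by Theorem \ref{theorem3}. The mild subtlety worth emphasizing in the write-up is just this: the strengthening of (\ref{equation23}) to (\ref{equation33}) (uniform in $i$) is what allows the reduction to go through cleanly, since otherwise truncating the sequence would not preserve the single-point orbit bound needed to invoke Theorem \ref{theorem3}.
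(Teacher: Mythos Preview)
Your proposal is correct and follows essentially the same approach as the paper: shift the sequence past the index where the errors fall below the $\delta$ from Theorem~\ref{theorem3}, verify that the shifted data satisfy the hypotheses of Theorem~\ref{theorem3} (using property~$(\mathcal{F})$ for $\overline{f}$ and the uniformity of (\ref{equation33}) for the orbit bound on $z_0$), and read off the conclusion. The only cosmetic difference is that the paper shifts by $n'=\max\{\hat{n},n_1\}$ and takes $\tilde{n}\geq 2\hat{n}+n_1$, whereas your choice of shift $n_1$ and threshold $\tilde{n}=n_1+\hat{n}$ is slightly tighter but works for exactly the same reason.
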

\begin{proof}
By Theorem \ref{theorem3} there exist $\delta>0$ and $\hat{n} \in \mathbb{N}$ such that the following property is satisfied:

$(\mathcal{P}_7)$  for each $f \in \mathfrak{F}$ and each sequence $\{x_i\}_{i=0}^{\infty} \subset X$ satisfying (\ref{equation23}) and (\ref{equation24}), 
the following inequality is satisfied
$$d(x_i,E)<\epsilon \thinspace \thinspace \mbox{for all } \thinspace i \geq \hat{n}.$$
Since $\lim\limits_{i \rightarrow \infty} \delta_i=0$, there exists $n_1 \in \mathbb{N}$ such that 
\begin{align}\label{equation35}
\delta_i< \delta \thinspace \thinspace \mbox{for all } \thinspace i \geq n_1.
\end{align}
Let $n'= \max \{\hat{n},n_1\}$ and $\tilde{n} \geq 2 \hat{n}+n_1$. Let $n \geq \tilde{n}$ be an integer. It suffices to prove that $d(x_n,E)<\epsilon$. Set $z_i=x_{i+n'}$ and $\hat{f}(i)=f(i+n')$ for all $i \in \mathbb{N}_0$. Since $n' \in \mathbb{N}$, by property $(\mathcal{F})$ we deduce that $\hat{f} \in \mathfrak{F}$. Using (\ref{equation34}) and (\ref{equation35}) 
\begin{align*}
\mbox{diam } \mathcal{O}_{T_{\hat{f}(j)}}(z_{i+1},T_{\hat{f}(i)}z_i)&=\mbox{diam } \mathcal{O}_{T_{f(j+n')}}(x_{i+n'+1},T_{f(i+n')}x_{i+n'})\\
&\leq \delta_{i+n'}\\
& < \delta.
\end{align*}
Also, by (\ref{equation33})
$$\mbox{diam } \mathcal{O}_{T_i}(z_0, \theta)= \mbox{diam } \mathcal{O}_{T_i}(x_{n'},\theta) \leq \frac{K}{s}.$$
Therefore, by property $(\mathcal{P}_7)$ we deduce that
\begin{align}\label{equation36}
d(z_i,E) < \epsilon \thinspace \thinspace \mbox{for all } i \geq \hat{n}.
\end{align}
As $n \geq \tilde{n}$ and $n'=\max \{\hat{n},n_1\}$, $n-n' \geq \tilde{n}-n' \geq \hat{n}$. Therefore, by (\ref{equation36}) we conclude that
$$d(x_n,E)=d(z_{n-n'},E) < \epsilon.$$
\end{proof}
\section{Weak Ergodic Theorems}
Pustylnik et al. \cite{11} and Butnariu et al. \cite{4} obtained weak ergodic theorems in metric spaces. Motivated by them, we prove weak ergodic theorems in the setting of $b$-metric spaces. 
\begin{theorem}\label{theorem5}
Let $(X,d)$ be a complete $b$-metric space with coefficient $s \geq 1$. For each $i \in  \mathbb{N}_0$, let $T_i:X \rightarrow X$ be a weak quasi-contraction mapping satisfying $(\ref{equation20})$. Let $\mathfrak{F}$ be a set of mappings $f: \mathbb{N}_0 \rightarrow \mathbb{N}_0$ with property $(\mathcal{F})$. Suppose that the following property holds:

$(\mathcal{P}_8)$ for each $\epsilon>0$, there exists a natural number $n_{\epsilon}$ such that for each $f \in \mathfrak{F}$ and each $x,y \in X$
\begin{align*}
d(T_{f(n_{\epsilon})}T_{f(n_{\epsilon}-1)}\ldots T_{f(1)}T_{f(0)}x,T_{f(n_{\epsilon})}T_{f(n_{\epsilon}-1)}\ldots T_{f(1)}T_{f(0)}y) < \epsilon.
\end{align*}
Then for each $\epsilon>0$, there exist $\delta>0$ and $n \in \mathbb{N}$ such that for each $f \in \mathfrak{F}$ and each pair of sequences $\{x_i\}_{i=0}^{\infty}, \{y_i\}_{i=0}^{\infty} \subset X$ satisfying
\begin{align}
\mbox{diam }\mathcal{O}_{T_{f(j)}}(x_{i+1},T_{f(i)}x_i) \leq \delta \thinspace \thinspace \mbox{for all } \thinspace i,j \in \mathbb{N}_0, \label{eq38}\\
\mbox{diam }\mathcal{O}_{T_{f(j)}}(y_{i+1},T_{f(i)}y_i) \leq \delta \thinspace \thinspace \mbox{for all }  \thinspace i,j \in \mathbb{N}_0, \label{eq39}
\end{align}
the following inequality is satisfied
$$d(x_i,y_i) < \epsilon \thinspace \thinspace \mbox{for all } i \geq n.$$
\end{theorem}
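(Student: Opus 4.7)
The plan is to adapt the proof of Theorem~\ref{theorem1} by tracking the two inexact orbits $\{x_i\}$ and $\{y_i\}$ in parallel and replacing property $(\mathcal{P}_1)$, which forced a single exact orbit close to $E$, with property $(\mathcal{P}_8)$, which forces two exact orbits close to each other.

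Given $\epsilon>0$, I would first invoke $(\mathcal{P}_8)$ with tolerance $\epsilon/(3s^2)$ to extract $n_0\in\mathbb{N}$ such that
$$d(T_{f(n_0-2)}T_{f(n_0-3)}\cdots T_{f(1)}T_{f(0)}x,\,T_{f(n_0-2)}T_{f(n_0-3)}\cdots T_{f(1)}T_{f(0)}y)<\frac{\epsilon}{3s^2}$$
for every $f\in\mathfrak{F}$ and every $x,y\in X$. Then I would choose $\delta>0$ satisfying $n_0 s^{n_0}(1+s)\delta<2\epsilon/3$. This $\delta$ together with the threshold $n:=n_0$ should witness the conclusion of the theorem: given any $f\in\mathfrak{F}$, sequences $\{x_i\},\{y_i\}\subset X$ satisfying (\ref{eq38}) and (\ref{eq39}), and an integer $n\geq n_0$, it suffices to show $d(x_n,y_n)<\epsilon$. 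I would then shift indices by setting $z_i=x_{i+n-n_0+1}$, $w_i=y_{i+n-n_0+1}$, and $\tilde f(i)=f(i+n-n_0+1)$ for all $i\in\mathbb{N}_0$; property $(\mathcal{F})$ ensures $\tilde f\in\mathfrak{F}$, and the inexactness bounds transfer verbatim to $\{z_i\}$ and $\{w_i\}$.

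Next, I would introduce the exact companion orbits $u_0=z_0$, $u_{i+1}=T_{\tilde f(i)}u_i$ and $v_0=w_0$, $v_{i+1}=T_{\tilde f(i)}v_i$. Property $(\mathcal{P}_8)$ applied to $\tilde f$, $z_0$, $w_0$ gives $d(u_{n_0-1},v_{n_0-1})<\epsilon/(3s^2)$. Re-running the induction from the proof of Theorem~\ref{theorem1} independently for the two pairs $(z_i,u_i)$ and $(w_i,v_i)$---an induction that uses only the weak quasi-contraction property, the monotonicity (\ref{equation20}), and the shifted inexactness bound---would produce
$$\mbox{diam }\mathcal{O}_{T_{\tilde f(j)}}(z_{n_0-1},u_{n_0-1})\leq n_0 s^{n_0-1}\delta \quad \mbox{ and } \quad \mbox{diam }\mathcal{O}_{T_{\tilde f(j)}}(w_{n_0-1},v_{n_0-1})\leq n_0 s^{n_0-1}\delta.$$
Two applications of the $b$-metric triangle inequality along the chain $z_{n_0-1}\to u_{n_0-1}\to v_{n_0-1}\to w_{n_0-1}$ then give
$$d(x_n,y_n)\leq s\,d(z_{n_0-1},u_{n_0-1})+s^2 d(u_{n_0-1},v_{n_0-1})+s^2 d(v_{n_0-1},w_{n_0-1})<n_0 s^{n_0}(1+s)\delta+\frac{\epsilon}{3}<\epsilon.$$

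The main obstacle is bookkeeping rather than conceptual: chaining the $b$-metric triangle inequality twice introduces a factor $s^2$ in front of the $(\mathcal{P}_8)$-generated term and up to $s^{n_0+1}$ in front of the induction-generated terms, so the tolerance fed to $(\mathcal{P}_8)$ and the radius $\delta$ must be carefully balanced against these $s$-factors. The induction bounding $\mbox{diam }\mathcal{O}_{T_{\tilde f(j)}}(z_i,u_i)$ by $(i+1)s^i\delta$ is verbatim the argument already carried out in Theorem~\ref{theorem1}, so I would simply invoke it rather than reproduce the case analysis that splits the diameter into pieces of the form $d(T_{\tilde f(j)}^k z_{i+1},T_{\tilde f(j)}^l u_{i+1})$ together with its two diagonal companions.
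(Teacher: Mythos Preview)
Your proposal is correct and follows essentially the same approach as the paper's proof: shift indices via property $(\mathcal{F})$, introduce the exact companion orbits, invoke the induction from Theorem~\ref{theorem1} for each pair, and close with two applications of the $b$-metric triangle inequality. The only differences are cosmetic (your tolerance $\epsilon/(3s^2)$ versus the paper's $\epsilon/(2s^2)$, your variable names $z,w,u,v$ versus the paper's $\bar x,\bar y,\tilde x,\tilde y$); be careful only with the minor notational clash of using $n$ both for the threshold $n:=n_0$ and for the running integer---the paper avoids this by calling the latter $m$.
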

\begin{proof}
Suppose that $\epsilon>0$ is given. Then by property $(\mathcal{P}_8)$, there exists $n \in \mathbb{N}$ such that for each $f \in \mathfrak{F}$ and each $x,y \in X$ we have
\begin{align}\label{eq40}
d(T_{f(n-2)}T_{f(n-3)}\ldots T_{f(1)}T_{f(0)}x,T_{f(n-2)}T_{f(n-3)}\ldots T_{f(1)}T_{f(0)}y) < \frac{\epsilon}{2s^2}. 
\end{align}
Choose  a real number $\delta$ such that 
\begin{align}\label{eq41}
0<\delta < \frac{\epsilon}{4ns^{n+1}}.
\end{align}
Suppose that $f \in \mathfrak{F}$ and the sequences $\{x_i\}_{i=0}^{\infty}, \{y_i\}_{i=1}^{\infty} \subset X$ satisfy (\ref{eq38}) and (\ref{eq39}), respectively. Let $m \geq n$ be an integer. It suffices to show that $d(x_m,y_m)< \epsilon$. Set $\bar{f}(i)=f(i+m-n+1)$, $\bar{x}_i=x_{i+m-n+1}$ and $\bar{y}_i=y_{i+m-n+1}$ for all $i \in \mathbb{N}_0$. Since $m \geq n$, using property $(\mathcal{F})$ we get $\bar{f} \in \mathfrak{F}$. Using (\ref{eq38}) 
\begin{align}\label{eqq1}
d(\bar{x}_{i+1},T_{\bar{f}(i)}\bar{x}_i)= d(x_{i+m-n+2},T_{f(i+m-n+1)}x_{i+m-n+1}) \leq \delta.
\end{align}
Similarly, using (\ref{eq39})
\begin{align}\label{eqq2}
d(\bar{y}_{i+1},T_{\bar{f}(i)} \bar{y}_i)=d(y_{i+m-n+2},T_{f(i+m-n+1)}x_{i+m-n+1}) \leq \delta.
\end{align}
Set $\tilde{x}_0=\bar{x}_0$ and $\tilde{x}_{i+1}=T_{\bar{f}(i)}\tilde{x}_i$ for all $i \in \mathbb{N}_0$. Also, set $\tilde{y}_0=\bar{y}_0$ and $\tilde{y}_{i+1}=T_{\bar{f}(i)}\tilde{y}_i$ for all $i \in \mathbb{N}_0$. In view of (\ref{eq40}),
\begin{align}\label{eqq5}
d(\tilde{x}_{n-1},\tilde{y}_{n-1})< \frac{\epsilon}{2s^2}.
\end{align}
Proceeding as in the proof of Theorem \ref{theorem1} using (\ref{eq38}) and (\ref{eqq1}) we get
\begin{align}\label{eq42}
\mbox{diam } \mathcal{O}_{T_{\bar{f}(j)}}(\tilde{x}_i,\bar{x}_i) \leq (i+1)s^i\delta \thinspace \thinspace \mbox{for all } \thinspace i,j \in \mathbb{N}_0.
\end{align} 
Similarly, using (\ref{eq39}) and (\ref{eqq2}) we have  
\begin{align}\label{eqq3}
\mbox{diam } \mathcal{O}_{T_{\bar{f}(j)}}(\tilde{y}_i, \bar{y}_i) \leq (i+1)s^i \delta \thinspace \thinspace \mbox{for all } \thinspace i,j \in \mathbb{N}_0.
\end{align}
Using (\ref{eq41}), (\ref{eqq5}),  (\ref{eq42})  and (\ref{eqq3})  we deduce that
\begin{align*}
d(x_m,y_m)&=d(\bar{x}_{n-1},\bar{y}_{n-1})\\
& \leq s d(\bar{x}_{n-1},\tilde{x}_{n-1})+s^2 d(\tilde{x}_{n-1},\tilde{y}_{n-1})+s^2 d(\tilde{y}_{n-1}, \bar{y}_{n-1})\\
& \leq ns^n \delta+s^2 \frac{\epsilon}{2s^2}+ ns^{n+1}\delta\\
& < \frac{\epsilon}{4}+\frac{\epsilon}{2}+\frac{\epsilon}{4}\\
&= \epsilon.
\end{align*}
\end{proof}
\begin{theorem}\label{theorem6}
Let $(X,d)$ be a complete $b$-metric space with coefficient $s \geq 1$. For each $i \in  \mathbb{N}_0$, let $T_i:X \rightarrow X$ be a weak quasi-contraction mapping satisfying $(\ref{equation20})$. Let $\mathfrak{F}$ be a nonempty set of mappings $f: \mathbb{N}_0 \rightarrow \mathbb{N}_0$ with property
$(\mathcal{F})$. Suppose that property $(\mathcal{P}_{8})$ holds.  Let $\{\delta_i \}_{i=0}^{\infty}$ be a sequence of positive numbers such that $\lim\limits_{i \rightarrow \infty}\delta_i=0$. Then for each $\epsilon>0$, there exists $\check{n} \in \mathbb{N}$ such that for each $f \in \mathfrak{F}$ and each pair of sequences $\{x_i\}_{i=0}^{\infty}, \{y_i\}_{i=0}^{\infty} \subset X$ satisfying
 \begin{align}
 \mbox{diam } \mathcal{O}_{T_{f(j)}}(x_{i+1},T_{f(i)}x_i) \leq \delta_i \thinspace \thinspace \mbox{for all } \thinspace i,j \in \mathbb{N}_0, \label{equation38}\\
  \mbox{diam } \mathcal{O}_{T_{f(j)}}(y_{i+1},T_{f(i)}y_i) \leq \delta_i \thinspace \thinspace \mbox{for all } \thinspace i,j \in \mathbb{N}_0, \label{equation39}
  \end{align}
  the following inequality is satisfied
  $$d(x_i,y_i)< \epsilon \thinspace \thinspace \mbox{for all } \thinspace i \geq \check{n}.$$
\end{theorem}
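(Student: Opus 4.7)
The plan is to imitate the reduction used in Theorem \ref{theorem2}: the variable-error hypothesis $\delta_i \to 0$ is converted into the uniform-error hypothesis of Theorem \ref{theorem5} by shifting indices far enough into the tail that $\delta_i$ becomes smaller than the $\delta$ produced by Theorem \ref{theorem5}.

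First I would invoke Theorem \ref{theorem5} for the given $\epsilon>0$ to obtain a $\delta>0$ and an $n\in\mathbb{N}$ such that, whenever $f\in\mathfrak{F}$ and two sequences satisfy the uniform bound $\delta$ in place of $\delta_i$ in (\ref{eq38})--(\ref{eq39}), the inequality $d(x_i,y_i)<\epsilon$ holds for all $i\geq n$. Since $\delta_i\to 0$, pick $n_1\in\mathbb{N}$ with $\delta_i<\delta$ for all $i\geq n_1$, and set $n'=\max\{n,n_1\}$ and $\check{n}\geq 2n+n_1$ so that $\check{n}\geq n'+n$.

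Now for $f\in\mathfrak{F}$ and sequences $\{x_i\},\{y_i\}\subset X$ satisfying (\ref{equation38})--(\ref{equation39}), and for an arbitrary $m\geq\check{n}$, I would define the shifted data $\bar{f}(i)=f(i+n')$, $\bar{x}_i=x_{i+n'}$, $\bar{y}_i=y_{i+n'}$ for $i\in\mathbb{N}_0$. Property $(\mathcal{F})$ yields $\bar{f}\in\mathfrak{F}$, and the shift pushes every remaining error into the tail: for all $i,j\in\mathbb{N}_0$,
\begin{align*}
\mbox{diam }\mathcal{O}_{T_{\bar{f}(j)}}(\bar{x}_{i+1},T_{\bar{f}(i)}\bar{x}_i)
&= \mbox{diam }\mathcal{O}_{T_{f(j+n')}}(x_{i+n'+1},T_{f(i+n')}x_{i+n'})\\
&\leq \delta_{i+n'} < \delta,
\end{align*}
and similarly for $\{\bar{y}_i\}$. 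Thus the shifted sequences satisfy the uniform-$\delta$ hypothesis of Theorem \ref{theorem5} with respect to $\bar{f}$.

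Applying Theorem \ref{theorem5} to $\{\bar{x}_i\},\{\bar{y}_i\},\bar{f}$ yields $d(\bar{x}_i,\bar{y}_i)<\epsilon$ for all $i\geq n$. Since $m\geq\check{n}\geq n'+n$, we have $m-n'\geq n$, hence
$$d(x_m,y_m)=d(\bar{x}_{m-n'},\bar{y}_{m-n'})<\epsilon,$$
completing the proof. There is no serious obstacle here; the content is purely bookkeeping, and the only thing to be careful with is choosing $\check{n}$ large enough that the shift by $n'$ still leaves at least $n$ indices before reaching $m$, which is guaranteed by $\check{n}\geq 2n+n_1\geq n+n'$.
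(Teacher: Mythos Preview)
Your proposal is correct and follows essentially the same approach as the paper's proof: invoke Theorem \ref{theorem5} to obtain $\delta$ and $n$, choose $n_1$ so that $\delta_i<\delta$ for $i\geq n_1$, set $n'=\max\{n,n_1\}$ and $\check{n}\geq 2n+n_1$, shift the data by $n'$ via property $(\mathcal{F})$, and conclude from $m-n'\geq n$. The only cosmetic difference is notation for the shifted sequences.
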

\begin{proof}
Suppose that $\epsilon>0$ is given. Then by Theorem \ref{theorem5} there exist $\delta>0$ and $n \in \mathbb{N}$ such that the following property is satisfied:

$(\mathcal{P}_9)$ for each $f \in \mathfrak{F}$ and each pair of sequences $\{x_i\}_{i=0}^{\infty}, \{y_i\}_{i=0}^{\infty} \subset X$ satisfying
$$\mbox{diam } \mathcal{O}_{T_{f(j)}}(x_{i+1},T_{f(i)}x_i) \leq \delta \thinspace \thinspace \mbox{for all } i,j \in \mathbb{N}_0,$$
$$\mbox{diam } \mathcal{O}_{T_{f(j)}}(y_{i+1},T_{f(i)}x_i) \leq \delta \thinspace \thinspace \mbox{for all } i,j \in \mathbb{N}_0,$$
the following inequality holds
$$d(x_i,y_i) < \epsilon \thinspace \thinspace \mbox{for all } \thinspace i \geq n.$$
Since $\lim\limits_{i \rightarrow \infty} \delta_i=0$, there exists $n_1 \in \mathbb{N}$ such that
\begin{align}\label{equation40}
\delta_i< \delta \thinspace \thinspace \mbox{for all } i \geq n_1.
\end{align}
Let $n'=\max \{n,n_1\}$ and $\check{n} \geq 2n+n_1$.  Suppose that $f \in \mathfrak{F}$ and the sequences $\{x_i\}_{i=0}^{\infty}, \{y_i\}_{i=0}^{\infty} \subset X$ satisfy  (\ref{equation38}) and (\ref{equation39}), respectively. Let $m \geq \check{n}$ be an integer. It suffices to show that $d(x_m,y_m) < \epsilon$. 
Set $\check{f}(i)=f(i+n')$, $\check{x}_i=x_{i+n'}$ and $\check{y}_i=y_{i+n'}$ for all $i \in \mathbb{N}_0$. Since $n' \in \mathbb{N}$, by property $(\mathcal{F})$ $\check{f} \in \mathfrak{F}$. Using (\ref{equation38}) and (\ref{equation40})
\begin{align*}
\mbox{diam } \mathcal{O}_{T_{\check{f}(j)}} (\check{x}_{i+1}, T_{\check{f}(i)} \check{x}_i)&= \mbox{diam } \mathcal{O}_{T_{f(j+n')}}(x_{i+n'+1},T_{{f}(i+n')}x_{i+n'})\\
& \leq \delta_{i+n'}\\
& < \delta.
\end{align*}
Similarly, using (\ref{equation39}) and (\ref{equation40}) $\mbox{diam } \mathcal{O}_{T_{\bar{f}(j)}}(\check{y}_{i+1},T_{\check{f}(i)}\check{y}_i) \leq  \delta$. Therefore, by property $(\mathcal{P}_9)$ we deduce that $d(\check{x}_i,\check{y}_i) < \epsilon$ for all $i \geq n$. Since $m\geq \check{n}$, $m-n' \geq n$. This gives  $d(x_m,y_m)=d(\check{x}_{m-n'},\check{y}_{m-n'})< \epsilon$. 
\end{proof}

\section*{Acknowledgements}
 The $^*$corresponding author is supported by University Grants Commission Research Grant (Ref. No. NFO-$2018$-$19$-OBC-HAR$72140$).
 
\textbf{Anuradha Gupta}\\
 Department of Mathematics, Delhi College of Arts and Commerce,\\
  University of Delhi, Netaji Nagar, \\
  New Delhi-110023, India.\\
  \vspace{0.2cm}
 email: dishna2@yahoo.in\\
   \textbf{Manu Rohilla}\\
  Department of Mathematics, University of Delhi, \\
  New Delhi-110007, India.\\
  email: manurohilla25994@gmail.com
\end{document}